\documentclass[12pt,oneside,reqno]{amsart}

\usepackage{amsmath,amssymb,amsthm}
\usepackage[margin=1.1in]{geometry}
\usepackage[hidelinks]{hyperref}
\hypersetup{
pdftitle={Fourier Ratios of Graph Kernels: Energy Bounds, Optimal Labelings, and Recovery},
pdfauthor={Vishal Gupta and Alex Iosevich},
pdfkeywords={Fourier ratio, graph energy, graph layout, Fourier algebra, circulant graph, spectral projector, compressed sensing}
}

\numberwithin{equation}{section}

\newtheorem{theorem}{Theorem}[section]
\newtheorem{proposition}[theorem]{Proposition}
\newtheorem{corollary}[theorem]{Corollary}
\newtheorem{lemma}[theorem]{Lemma}
\newtheorem{question}[theorem]{Question}

\theoremstyle{definition}

\theoremstyle{remark}
\newtheorem{remark}[theorem]{Remark}

\begin{document}

\title[Fourier Ratios of Graph Kernels]{Fourier Ratios of Graph Kernels: Energy Bounds, Optimal Labelings, and Recovery}

\author{Vishal Gupta}
\thanks{Department of Mathematics, University of Rochester, Rochester, New York 14627, USA. Email addresses: vishalgupta@rochester.edu and alex.iosevich@rochester.edu.}

\author{Alex Iosevich}
\thanks{Corresponding author: Alex Iosevich.}

\date{}

\subjclass[2020]{Primary 05C50; Secondary 42A99, 94A12}
\keywords{Fourier ratio, graph energy, graph layout, Fourier algebra, circulant graph, spectral projector, compressed sensing}

\begin{abstract}
We study labeling-sensitive Fourier complexity for finite graph kernels. After identifying the vertices of a graph with the cyclic group $\mathbb Z_N$, its adjacency matrix becomes a function on $\mathbb Z_N^2$. Minimizing the quotient of the $\ell^1$ and $\ell^2$ norms of its two-dimensional Fourier transform over all vertex labelings gives an isomorphism invariant $\operatorname{FR}_{\min}(G)$.

A nuclear-norm argument gives
\[
\operatorname{FR}_{\min}(G)
\geq
\frac{\mathcal E(G)}{\sqrt{2s}},
\]
where $s$ is the number of edges and $\mathcal E(G)$ is the graph energy. The natural cyclic labeling attains equality for every circulant graph. We obtain exact formulas for several graph families and a labeling-sensitive complete bipartite example. We also connect the invariant with the Fourier algebra of $\mathbb Z_N^2$. The quantitative Cohen idempotent theorem implies that every Boolean kernel of bounded Fourier ratio has an exact signed coset decomposition whose length is independent of $N$.

A previously established Fourier-ratio recovery theorem gives stable Frobenius approximation of a fixed labeled adjacency matrix from Bernoulli samples. We distinguish this conclusion from exact edge recovery and from the problem of finding a good labeling.

For a Laplacian eigenvalue of multiplicity $m(\lambda)$, we prove
\[
\operatorname{FR}_{\min}(\Pi_\lambda)
\geq
\sqrt{m(\lambda)},
\]
with equality for circulant graphs. Strongly regular graphs and the Petersen graph show how adjacency and projector complexity can agree or differ. Direct projector sampling yields heat-kernel approximation. We conclude with a graph-signal spectral synthesis principle and asymptotic uniqueness from incomplete vertex data.
\end{abstract}

\maketitle

\section{Introduction}

Recovering a graph from partial information is a central problem in network theory, data science, signal processing, and machine learning. In many situations, one does not observe the full adjacency matrix, but only a random subset of its entries. The purpose of this paper is to propose a Fourier-analytic framework in which the relevant complexity is determined by the additive spectral organization of the edge relation.

The framework considered here is different from several familiar models. Low-rank matrix completion is based on rank and incoherence assumptions \cite{CandesRecht}. Graphon estimation is based on probabilistic latent-variable structure \cite{GaoLuZhou}. Graph-signal sampling usually concerns a signal defined on a graph that is already known \cite{ChenEtAl}. In the present paper, the principal object is a deterministic graph kernel, and the structural hypothesis is that the edge relation becomes Fourier-compressible after the vertices have been placed in cyclic coordinates.

Let $G=(V,E)$ be a nonempty simple graph with $|V|=N$. A labeling is a bijection
\[
\sigma:\mathbb Z_N\longrightarrow V.
\]
Under this labeling, the adjacency matrix becomes the function
\[
A_\sigma(x,y)=1_{\{\sigma(x)\sim\sigma(y)\}},
\]
defined on $\mathbb Z_N^2$. The two-dimensional Fourier transform is
\[
\widehat F(m,n)
=
\frac{1}{N}
\sum_{x,y\in\mathbb Z_N}
F(x,y)e^{-\frac{2\pi i(mx+ny)}{N}}.
\]
With this normalization, Parseval's identity takes the form
\[
\|\widehat F\|_2=\|F\|_2.
\]
For $F\neq0$, define
\[
\operatorname{FR}(F)
=
\frac{\|\widehat F\|_1}{\|\widehat F\|_2}.
\]
The main graph invariant studied in this paper is
\[
\operatorname{FR}_{\min}(G)
=
\min_{\sigma:\mathbb Z_N\to V}
\operatorname{FR}(A_\sigma).
\]

From this point of view, graph recovery becomes a hidden structure problem. One asks whether the vertices can be organized in additive coordinates for which the edge relation has a concentrated Fourier representation. Recoverability is therefore not determined only by the number of edges, the degree distribution, or the rank of the adjacency matrix. A dense graph may have a very small Fourier ratio when its edge relation is generated by a simple additive rule, while a sparse graph may have a much larger Fourier ratio when its edges are poorly aligned with the cyclic coordinates.

This distinction can already be seen in complete bipartite graphs. If the two parts of $K_{\frac{N}{2},\frac{N}{2}}$ are labeled by the even and odd residue classes, then the edge relation is determined by one character and its Fourier ratio equals $\sqrt{2}$. If the two parts are labeled by consecutive intervals, then the same abstract graph has Fourier ratio of order $(\log N)^2$. Thus, low Fourier complexity is not the same as edge sparsity, and it is not even a property of a displayed adjacency matrix without reference to the coordinates used to display it.

The dependence on labeling places the problem in the general setting of graph layout. Classical parameters such as bandwidth, minimum linear arrangement, cutwidth, and seriation seek orderings that reveal local or geometric regularity \cite{AtkinsBomanHendrickson,DiazPetitSerna}. A small Fourier ratio favors a different kind of organization. It favors global additive patterns in $\mathbb Z_N^2$, including diagonal translates, congruence classes, and affine relations. The parity labeling of a complete bipartite graph is a useful example because it is not geometrically local, but it is extremely simple from the additive Fourier point of view.

The central structural result of the paper is a lower bound in terms of graph energy. If $s=|E|$ and
\[
\mathcal E(G)
=
\sum_{j=1}^N|\lambda_j(A)|,
\]
then
\[
\operatorname{FR}_{\min}(G)
\geq
\frac{\mathcal E(G)}{\sqrt{2s}}.
\]
The proof is based on the nuclear norm. The two-dimensional Fourier transform multiplies the adjacency matrix on the left and on the right by unitary Fourier matrices, so it preserves the singular values. The entrywise $\ell^1$ norm dominates the nuclear norm, while the Frobenius norm of the adjacency matrix equals $\sqrt{2s}$.

For a circulant graph, the natural cyclic labeling makes the transformed adjacency matrix anti-diagonal, and the entries on the anti-diagonal are exactly the adjacency eigenvalues. The graph-energy lower bound is therefore attained. In particular, the natural labeling is globally optimal for every circulant graph. This observation gives exact formulas for complete graphs, balanced Tur\'an graphs, cycles, and parity-labeled complete bipartite graphs.

The invariant also has a finite Fourier-algebra interpretation. If $F$ is a Boolean kernel on $\mathbb Z_N^2$ with density $\alpha$, then its Fourier algebra norm equals
\[
\sqrt\alpha\operatorname{FR}(F).
\]
The quantitative Cohen idempotent theorem therefore gives an exact signed decomposition of every bounded-ratio Boolean kernel into a number of cosets of subgroups controlled only by its Fourier ratio and density. This supplies an inverse theorem in which low Fourier ratio forces additive structure. The decomposition may involve cancellation, and the symmetry and zero diagonal of a graph kernel create further questions that are not answered by the general theorem.

The recovery theorem used in this paper was developed in the Fourier-ratio literature \cite{AldalehEtAl,BursteinIosevichNathan}. Once a labeling with small Fourier ratio is available, the labeled adjacency matrix can be approximated stably in Frobenius norm from random sampled entries by minimizing the $\ell^1$ norm of its Fourier transform. Two qualifications are important. The theorem assumes that the labeling is already known, and it does not provide an algorithm for finding a labeling that realizes $\operatorname{FR}_{\min}(G)$. The conclusion is also a relative Frobenius-error estimate. It is not, by itself, an exact edge-recovery theorem.

A second theme is that Fourier-compressibility may arise at the level of the harmonic decomposition of the graph Laplacian. For a Laplacian eigenvalue $\lambda$, let $\Pi_\lambda$ denote the orthogonal projector onto the corresponding eigenspace. After choosing a labeling, the kernel $\Pi_\lambda(x,y)$ becomes a function on $\mathbb Z_N^2$. We define its minimum Fourier ratio over all labelings and prove
\[
\operatorname{FR}_{\min}(\Pi_\lambda)
\geq
\sqrt{m(\lambda)},
\]
where $m(\lambda)$ is the multiplicity of $\lambda$. For circulant graphs, the natural cyclic labeling attains equality for every eigenspace.

Edge complexity and harmonic complexity are related, but they are not the same. The complete graph has uniformly bounded adjacency Fourier ratio, while its nontrivial Laplacian projector has Fourier ratio $\sqrt{N-1}$. The cycle graph has adjacency Fourier ratio of order $\sqrt N$, while every Laplacian projector has Fourier ratio at most $\sqrt{2}$. For strongly regular graphs, the projector matrices are affine combinations of the adjacency matrix, the identity matrix, and the all-ones matrix, and this gives explicit inequalities between the two kinds of complexity. The Petersen graph shows that a labeling minimizing the adjacency Fourier ratio need not minimize either nontrivial projector Fourier ratio.

The adjacency and projector recovery statements use different observation models. Adjacency recovery begins with sampled entries of the adjacency matrix. Projector recovery begins with sampled entries of the projector kernel itself. Partial adjacency data does not directly provide partial projector data. We therefore treat projector recovery as a separate kernel-sampling problem and derive heat-kernel approximation only within that model.

The final part of the paper concerns a related but distinct question. Instead of a kernel on $\mathbb Z_N^2$, we consider a graph signal on $\mathbb Z_N$ and measure the distribution of its norm among Laplacian eigenspaces. A spectral-block Fourier ratio gives a quantitative restriction on how such a signal can concentrate. After the vertex-space norms are normalized explicitly, the resulting argument yields a graph analogue of spectral synthesis: a spectrally regular signal supported on a sublinear set must have normalized $\ell^1$ norm tending to zero. This does not follow from projector-kernel recovery, and it does not imply exact uniqueness at a fixed value of $N$. It is an asymptotic statement about graph signals.

The paper is organized as follows. Section \ref{sec:preliminaries} introduces the Fourier ratio, the labeling invariants, and the recovery theorem. Section \ref{sec:layout} discusses labeling dependence, graph layout, random relabeling, and comparison with classical spectral quantities. Section \ref{sec:energy} proves the universal Fourier lower bound and the graph-energy lower bound. Section \ref{sec:circulant} studies circulant graphs and exact examples. Section \ref{sec:extensions} treats affine kernels, Fourier algebra structure, weighted and directed matrices, perturbations, and the entropy interpretation. Section \ref{sec:projectors} develops the spectral-projector theory. Section \ref{sec:srg} studies strongly regular graphs and the Petersen graph. Section \ref{sec:projector-recovery} gives the projector-sampling and heat-kernel statements. Section \ref{sec:synthesis} proves the graph-signal spectral synthesis theorem. Section \ref{sec:questions} contains open problems. Appendix \ref{app:petersen-code} contains complete code for the Petersen computation.

\section{Fourier ratios and the recovery theorem}\label{sec:preliminaries}

Throughout the sections concerning kernels on $\mathbb Z_N^2$, all norms use counting measure. For $F:\mathbb Z_N^2\to\mathbb C$ and $1\leq q<\infty$, define
\[
\|F\|_q
=
\left(\sum_{x,y\in\mathbb Z_N}|F(x,y)|^q
\right)^{\frac{1}{q}}.
\]
The Fourier transform is
\begin{equation}\label{eq:fourier-transform}
\widehat F(m,n)
=
\frac{1}{N}
\sum_{x,y\in\mathbb Z_N}
F(x,y)e^{-\frac{2\pi i(mx+ny)}{N}}.
\end{equation}
The inversion formula is
\[
F(x,y)
=
\frac{1}{N}
\sum_{m,n\in\mathbb Z_N}
\widehat F(m,n)e^{\frac{2\pi i(mx+ny)}{N}},
\]
and Parseval's identity is
\[
\|\widehat F\|_2=\|F\|_2.
\]
For $F\neq0$, set
\begin{equation}\label{eq:fourier-ratio}
\operatorname{FR}(F)
=
\frac{\|\widehat F\|_1}{\|\widehat F\|_2}.
\end{equation}
Since $\widehat F$ has $N^2$ coordinates, Cauchy--Schwarz gives
\begin{equation}\label{eq:basic-fr-bounds}
1\leq\operatorname{FR}(F)\leq N.
\end{equation}
If $\widehat F$ is supported on at most $M$ frequencies, then
\begin{equation}\label{eq:support-upper-bound}
\operatorname{FR}(F)\leq\sqrt M.
\end{equation}

For a simple graph $G=(V,E)$ and a labeling $\sigma:\mathbb Z_N\to V$, let $A_\sigma$ denote the associated adjacency kernel. If $s=|E|$, then
\begin{equation}\label{eq:adjacency-frobenius}
\|A_\sigma\|_2^2=2s.
\end{equation}
We define
\begin{equation}\label{eq:fr-min-definition}
\operatorname{FR}_{\min}(G)
=
\min_{\sigma:\mathbb Z_N\to V}
\operatorname{FR}(A_\sigma)
\end{equation}
and
\begin{equation}\label{eq:fr-max-definition}
\operatorname{FR}_{\max}(G)
=
\max_{\sigma:\mathbb Z_N\to V}
\operatorname{FR}(A_\sigma).
\end{equation}
Both quantities are isomorphism invariants. The quotient
\[
\frac{\operatorname{FR}_{\max}(G)}{\operatorname{FR}_{\min}(G)}
\]
measures the instability of the displayed Fourier complexity under relabeling.

We record the finite-group recovery theorem in the form needed below. The result first appears in \cite[Theorem 1.21]{AldalehEtAl} and is restated in \cite[Theorem 3]{BursteinIosevichNathan}. For $X\subset H$, write
\[
\|h\|_{\ell^2(X)}
=
\left(\sum_{x\in X}|h(x)|^2
\right)^{\frac{1}{2}}.
\]

\begin{theorem}[Fourier-ratio recovery]\label{thm:fr-recovery}
Let $H$ be a finite abelian group of cardinality $M$, and let $f:H\to\mathbb C$ be nonzero. Suppose that the unitary Fourier transform of $f$ satisfies
\[
\operatorname{FR}(f)
=
\frac{\|\widehat f\|_1}{\|\widehat f\|_2}
\leq r.
\]
Fix $\epsilon\in(0,1)$, and form a random set $X\subset H$ by retaining each point independently with probability $p$. There are absolute constants $c,C>0$ such that, if
\begin{equation}\label{eq:recovery-sample-bound}
pM
\geq
C
\frac{r^2}{\epsilon^2}
\log^2(\frac{r}{\epsilon})
\log M,
\end{equation}
then, with probability at least $1-e^{-cpM}$, a minimizer $f^\sharp$ of
\begin{equation}\label{eq:recovery-program}
\min_g\|\widehat g\|_1
\text{subject to}
\|g-f\|_{\ell^2(X)}
\leq
\epsilon\|f\|_2
\end{equation}
satisfies
\begin{equation}\label{eq:recovery-error}
\|f^\sharp-f\|_2
\leq
11.47\epsilon\|f\|_2.
\end{equation}
\end{theorem}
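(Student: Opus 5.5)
This is a compressed-sensing statement for the bounded orthonormal system of characters on $H$. The plan has three parts: convert the Fourier-ratio hypothesis into best $k$-term approximation control, prove a robust null space property for the sampling operator on Fourier-sparse signals, and combine these into the error bound (\ref{eq:recovery-error}).

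\emph{Step 1: compressibility from the ratio.} Write $f$ in the unitary character basis with coefficient vector $\widehat f$. Let $\sigma_k(\widehat f)_1$ denote the $\ell^1$ error of the best $k$-term approximation. By the Stechkin estimate, the tail beyond the $k$ largest coefficients satisfies
\[
\|\widehat f-\widehat f_k\|_2\leq \frac{\|\widehat f\|_1}{\sqrt k}\leq \frac{r}{\sqrt k}\|f\|_2 .
\]
Hence the choice $k\approx r^2/\epsilon^2$ makes $\sigma_k(\widehat f)_1/\sqrt k\lesssim \epsilon\|f\|_2$.

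\emph{Step 2: the sampling operator.} Let $\Phi$ be the $|X|\times M$ matrix whose rows are the characters at the points of $X$, rescaled by $\sqrt{M/|X|}$, so that $g|_X$ corresponds to $\Phi\widehat g$ up to normalization. The characters are unimodular with $|\chi(x)|=M^{-1/2}$ in the unitary normalization, so this is a bounded orthonormal system with constant $1$. The Rudelson--Vershynin, Bourgain and Haviv--Regev bounds then give the restricted isometry property of order $2k$ with constant $\delta<0.5$ (or a suitable threshold). This holds as soon as the number of samples satisfies $m\gtrsim k\log^2 k\log M$, with failure probability exponentially small in $m$.

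The Bernoulli model is handled by conditioning on $|X|$. A Chernoff bound gives $|X|\geq pM/2$ with probability $1-e^{-cpM}$, and conditionally on $|X|$ the set $X$ is uniform, which reduces to sampling without replacement. With $k=r^2/\epsilon^2$ this matches (\ref{eq:recovery-sample-bound}).

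\emph{Step 3: stability.} The restricted isometry property implies the robust $\ell^2$ null space property. The standard quadratically constrained basis pursuit theorem (Foucart--Rauhut, Theorem 6.12) then gives
\[
\|\widehat{f^\sharp}-\widehat f\|_2\leq C_1\frac{\sigma_k(\widehat f)_1}{\sqrt k}+C_2\eta ,
\]
where $\eta$ is the rescaled noise level. Here $\eta=\sqrt{M/|X|}\,\epsilon\|f\|_2$ relative to the normalized operator. Because the rescaling converts $\ell^2(X)$ norms into $\ell^2(H)$-comparable norms, this is a mismatch, and it is the main obstacle. The constraint $\|g-f\|_{\ell^2(X)}\leq\epsilon\|f\|_2$ as stated is unnormalized, so the constant $11.47$ depends on exactly how the cited result in \cite{AldalehEtAl} normalizes it.

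My first attempt would follow the paper's source. There the operator is the restriction itself with noise level $\epsilon\|f\|_2$, and the isometry is applied to $\sqrt{M/|X|}$ times the restriction. The error in the true signal then contributes an $\epsilon$-term with constant $C_2$, while $\sigma_k/\sqrt k\leq\epsilon\|f\|_2$ contributes $C_1\epsilon$. Tracking $C_1,C_2$ for $\delta_{2k}$ near the threshold and applying Parseval should produce the explicit $11.47$. I would verify this constant last, and otherwise defer to the cited source.
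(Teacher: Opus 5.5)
\textbf{There is a genuine gap in Step 3, and the displayed statement cannot be proved as written.} The paper gives no proof to compare with; it imports the statement from \cite[Theorem 1.21]{AldalehEtAl}. Your outline is the standard mechanism for results of this kind: control the tail, prove the RIP of order $2k$ for the subsampled character system with $k\approx r^2/\epsilon^2$, then apply stability of quadratically constrained basis pursuit. The gap is exactly where you noticed the normalization mismatch and then waved it through.

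Take $\Phi=\sqrt{M/|X|}\,R_XF^{*}$, where $R_X$ is restriction to $X$ and $F^{*}$ is the unitary inverse transform. The constraint in \eqref{eq:recovery-program} then reads $\|\Phi(\widehat g-\widehat f)\|_2\le\eta$ with $\eta=\sqrt{M/|X|}\,\epsilon\|f\|_2\approx\epsilon\|f\|_2/\sqrt p$. So the stability theorem gives an error only of order $(C_1+C_2p^{-1/2})\epsilon\|f\|_2$, and \eqref{eq:recovery-sample-bound} allows $p\asymp\log M/M$. Your claim that this term is ``$C_2\epsilon$'' cannot be rescued by bookkeeping, because the displayed statement is false. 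Counterexample:
take $f=M^{-1/2}\chi$ for a character $\chi$, so $\|f\|_2=\operatorname{FR}(f)=1$ and one may take $r=1$. Fix $\epsilon<1/11.47$ and let $M\to\infty$ with $p$ equal to the threshold in \eqref{eq:recovery-sample-bound}, so that eventually $2p\le\epsilon^2$. With probability at least $1-e^{-pM/3}$ we have $\|f\|_{\ell^2(X)}^2=|X|/M\le 2p\le\epsilon^2$. Then $g=0$ is feasible, hence the unique minimizer, and $\|f^\sharp-f\|_2=\|f\|_2$.

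Your argument closes only after the radius is renormalized to $\epsilon\sqrt{|X|/M}\,\|f\|_2$, i.e.\ normalized averages on $X$ and on $H$. Then $\eta=\epsilon\|f\|_2$. With $k=\lceil r^2/\epsilon^2\rceil$, the trivial bound $\sigma_k(\widehat f)_1\le\|\widehat f\|_1\le r\|f\|_2$ gives $\|f^\sharp-f\|_2\le(C_1+C_2)\epsilon\|f\|_2$ by Parseval. Stechkin is not needed, and in any case it bounds $\sigma_k(\widehat f)_2$, not the quantity $\sigma_k(\widehat f)_1/\sqrt k$ that enters the stability estimate.

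\textbf{The probability claim is a second, independent gap.} Your Chernoff-and-conditioning step controls only $|X|$. The failure probability of the whole argument is that of the RIP theorem. The results you cite do not give failure probability exponentially small in the number of samples, and no argument can give $1-e^{-cpM}$ with an absolute $c$, under either normalization of the radius. Let $W\le H$ be a subgroup of index $k$, for instance $H=\mathbb Z_M$ with $k\mid M$. Then $\widehat{1_W}$ is supported on the $k$ characters trivial on $W$, with equal moduli, so $\operatorname{FR}(1_W)=\sqrt k$. Consider the event $X\cap W=\varnothing$, which has probability $(1-p)^{M/k}\ge e^{-2pM/k}$ for $p\le\frac12$. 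On it the RIP of order $k$ fails, since $\Phi\widehat{1_W}=0$, and $g=0$ is feasible for every radius. So $f^\sharp=0$, and recovery fails for $\epsilon<1/11.47$. For fixed $k$ and large $M$, \eqref{eq:recovery-sample-bound} can be met with $p\le\frac12$. Once $k>2/c$, this failure probability exceeds $e^{-cpM}$. Thus the failure probability must degrade with $r$, and can be as large as $e^{-2pM/r^2}$.

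Completed with the corrected radius, your route proves the error bound only with the $r$-dependent success probability that the RIP theorem supplies, not the theorem as displayed.
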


Applying Theorem \ref{thm:fr-recovery} to $H=\mathbb Z_N^2$ gives the graph-kernel consequence.

\begin{corollary}[Stable approximation of a labeled adjacency matrix]\label{cor:adjacency-recovery}
Let $G$ be a simple graph with $N$ vertices and $s$ edges, and fix a labeling $\sigma:\mathbb Z_N\to V(G)$. Suppose
\[
\operatorname{FR}(A_\sigma)\leq r.
\]
Let $X\subset\mathbb Z_N^2$ be a Bernoulli-$p$ sample. If
\begin{equation}\label{eq:graph-sample-bound}
pN^2
\geq C\frac{r^2}{\epsilon^2}
\log^2\left(\frac{r}{\epsilon}\right)
\log(N^2),
\end{equation}
then, with probability at least $1-e^{-cpN^2}$, a minimizer $A^\sharp$ of \eqref{eq:recovery-program}, with $f=A_\sigma$, satisfies
\begin{equation}\label{eq:adjacency-error}
\|A^\sharp-A_\sigma\|_2
\leq
11.47\epsilon\sqrt{2s}.
\end{equation}
\end{corollary}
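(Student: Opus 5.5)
The corollary is a direct specialization of Theorem \ref{thm:fr-recovery}, so the plan is to check that each hypothesis holds in the graph setting and then translate the conclusion. We take $H=\mathbb Z_N^2$, which is a finite abelian group of cardinality $M=N^2$, and $f=A_\sigma$. This function is nonzero because $G$ is nonempty; in any case $s\geq1$ is implicit, since otherwise $\operatorname{FR}(A_\sigma)$ is undefined.

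The first thing to verify is that the normalization \eqref{eq:fourier-transform} is the unitary Fourier transform on $\mathbb Z_N^2$. The factor $1/N$ equals $|H|^{-1/2}$, and Parseval's identity $\|\widehat F\|_2=\|F\|_2$ holds with counting measure. Hence the Fourier ratio \eqref{eq:fourier-ratio} coincides with the quantity appearing in Theorem \ref{thm:fr-recovery}, and the hypothesis $\operatorname{FR}(A_\sigma)\leq r$ is exactly the theorem's hypothesis. The Bernoulli-$p$ sample $X\subset\mathbb Z_N^2$ is the random set of the theorem. The sample condition \eqref{eq:graph-sample-bound} is \eqref{eq:recovery-sample-bound} with $M=N^2$, and the constants $c,C$ are the absolute constants of the theorem.

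Applying the theorem, with probability at least $1-e^{-cpN^2}$ any minimizer $A^\sharp$ of \eqref{eq:recovery-program} with $f=A_\sigma$ satisfies
\[
\|A^\sharp-A_\sigma\|_2\leq 11.47\epsilon\|A_\sigma\|_2 .
\]
By \eqref{eq:adjacency-frobenius}, the kernel $A_\sigma$ takes values in $\{0,1\}$ and has exactly $2s$ ones, one for each ordered pair of adjacent vertices. Therefore $\|A_\sigma\|_2=\sqrt{2s}$, which gives \eqref{eq:adjacency-error}.

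There is no real obstacle here. The only point needing care is the bookkeeping of normalizations: the ratio must be computed with the unitary transform, so that the $N^2$-point group matches $M$. We should also note, as the paper stresses, that the program \eqref{eq:recovery-program} uses $\|f\|_2=\sqrt{2s}$ in its constraint. The conclusion is therefore a relative Frobenius bound for a fixed, known labeling, and not exact edge recovery.
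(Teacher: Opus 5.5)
Your proposal is correct and matches the paper, which obtains the corollary directly from Theorem \ref{thm:fr-recovery}. The paper takes $H=\mathbb Z_N^2$, $M=N^2$, $f=A_\sigma$, and uses $\|A_\sigma\|_2=\sqrt{2s}$ from \eqref{eq:adjacency-frobenius}; your check that the $1/N$ normalization is the unitary transform on an $N^2$-point group is the only bookkeeping required.
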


\begin{remark}\label{rem:recovery-scope}
Corollary \ref{cor:adjacency-recovery} is a stable Frobenius-norm approximation theorem for a fixed labeled matrix. It does not produce a labeling with small Fourier ratio, and it does not assert exact recovery of the binary edge set at fixed $\epsilon$.

For example, define the entrywise thresholded matrix
\[
\overline A(x,y)
=
1_{\{\operatorname{Re}A^\sharp(x,y)\geq\frac{1}{2}\}}.
\]
Every ordered entry on which $\overline A$ and $A_\sigma$ differ contributes at least $\frac{1}{4}$ to $\|A^\sharp-A_\sigma\|_2^2$. Hence
\begin{equation}\label{eq:hamming-bound}
\#\{(x,y):\overline A(x,y)\neq A_\sigma(x,y)\}
\leq
8(11.47)^2\epsilon^2s.
\end{equation}
A sufficient condition for exact recovery after thresholding is
\[
11.47\epsilon\sqrt{2s}<\frac{1}{2}.
\]
Thus, the accuracy parameter would have to decrease with the size of the graph. Likewise, if the right side of \eqref{eq:graph-sample-bound} exceeds $N^2$, the theorem gives no subquadratic sampling guarantee. This does not constitute an impossibility result.

The feasible radius in \eqref{eq:recovery-program} contains $\|A_\sigma\|_2=\sqrt{2s}$. In applications, one must therefore assume that $s$ is known or replace this quantity by a justified estimate. We do not address the estimation of this radius here.
\end{remark}

The role of the Fourier ratio in the sample bound is easiest to see by suppressing the common accuracy and logarithmic factors. A bounded Fourier ratio gives a polylogarithmic sufficient sample count. A Fourier ratio of order $\sqrt N$ gives a nearly linear sufficient sample count. A Fourier ratio of order $N$ makes the sufficient condition comparable with the size of the entire matrix. These are consequences of one recovery theorem, not information-theoretic lower bounds.

\section{Labeling dependence and graph layout}\label{sec:layout}

The abstract graph does not come with a preferred identification of its vertices with $\mathbb Z_N$. The quantity $\operatorname{FR}_{\min}(G)$ asks for the best additive coordinate system, while $\operatorname{FR}_{\max}(G)$ records the worst one. From the point of view of recovery, the distinction is essential because Theorem \ref{thm:fr-recovery} applies to the displayed kernel in the observed coordinates.

A labeling with small Fourier ratio should not be confused with a labeling that places most edges near the diagonal. If an edge set is described by a congruence condition, a diagonal translate, or a small collection of affine relations, then its Fourier transform is concentrated even when the displayed edges are spread throughout the matrix. By contrast, a labeling that produces a visually narrow band may still have a diffuse two-dimensional Fourier transform.

This gives a Fourier-analytic graph layout problem. Given $G$, one would like to find a permutation matrix $P$ for which the matrix $P^TAP$, viewed as a function on $\mathbb Z_N^2$, has a small Fourier ratio. The objective is global and nonlinear. It is also different from a spectral invariant of $A$, since conjugation by $P$ preserves all adjacency and Laplacian eigenvalues but may substantially alter the two-dimensional Fourier transform.

\subsection{A heuristic for random relabeling}

The cycle gives a useful model. Under its natural labeling, the edge set lies on the two relations $y=x+1$ and $y=x-1$, and the Fourier transform is supported on one anti-diagonal. If the vertices are relabeled by a uniformly random permutation, the same $2N$ ordered edges are distributed through $\mathbb Z_N^2$ in a highly dependent but apparently unstructured way.

For a fixed nonzero frequency $(a,b)$, one may write
\[
\widehat{A_\sigma}(a,b)
=
\frac{1}{N}
\sum_{(x,y)\in E_\sigma^{\operatorname{ord}}}
e^{-\frac{2\pi i(ax+by)}{N}},
\]
where $E_\sigma^{\operatorname{ord}}$ denotes the ordered edge set. A random-phase model treats the summands as if they were weakly dependent unit complex numbers. If there are $M$ ordered edges, this model predicts
\[
\mathbb E|\widehat{A_\sigma}(a,b)|
\approx
\frac{\sqrt\pi}{2}
\frac{\sqrt M}{N}.
\]
Summing over approximately $N^2$ frequencies and dividing by $\|A_\sigma\|_2=\sqrt M$ gives the heuristic
\begin{equation}\label{eq:random-labeling-heuristic}
\operatorname{FR}(A_\sigma)
\approx
\frac{\sqrt\pi}{2}N.
\end{equation}

This calculation is not a theorem. The edge locations arising from a random permutation are strongly dependent, and the zero frequency has a different distribution from the other frequencies. Nevertheless, it gives a natural explanation for why a generic labeling may be much less compressible than a structured one. For a cycle, the natural value is of order $\sqrt N$, while the random-phase prediction is of order $N$.

The recovery interpretation must also be stated carefully. If the heuristic value in \eqref{eq:random-labeling-heuristic} is inserted into Corollary \ref{cor:adjacency-recovery}, the sufficient sample condition is no longer subquadratic after logarithmic factors are included. The theorem then becomes ineffective as a sparse-recovery result. It does not prove that recovery from fewer samples is impossible.

A rigorous analysis of random relabeling would require concentration inequalities for permutation statistics or a martingale argument that respects the dependencies among the edge locations. This appears to be a natural problem in its own right.

\subsection{Comparison with classical spectral quantities}

The Fourier ratio of a displayed adjacency matrix and the usual adjacency spectrum measure different properties. The adjacency spectrum is unchanged by relabeling. The Fourier ratio of the displayed kernel is not. The parity and interval labelings of $K_{\frac{N}{2},\frac{N}{2}}$ will give an exact example in which the spectrum is fixed while the Fourier ratio changes by an unbounded factor.

There is nevertheless one elementary spectral estimate. The zero Fourier coefficient is
\[
\widehat A_\sigma(0,0)
=
\frac{2s}{N}.
\]
Since $\|A_\sigma\|_2=\sqrt{2s}$,
\begin{equation}\label{eq:average-degree-bound}
\operatorname{FR}(A_\sigma)
\geq
\frac{\sqrt{2s}}{N}
=
\sqrt{\frac{\overline d}{N}},
\end{equation}
where $\overline d=\frac{2s}{N}$ is the average degree. This estimate is usually weak. The graph-energy bound proved in Section \ref{sec:energy} is substantially stronger.

The complete graph and the cycle show that there is no useful monotone comparison with the spectral radius or the Laplacian gap alone. For $K_N$, the adjacency spectral radius is $N-1$ and the nonzero Laplacian eigenvalue is $N$, while
\[
\operatorname{FR}_{\min}(K_N)
=
2\sqrt{1-\frac{1}{N}}.
\]
For $C_N$, the adjacency spectral radius is $2$ and the Laplacian gap is
\[
2-2\cos \left(\frac{2\pi}{N}\right),
\]
which is of order $N^{-2}$, while
\[
\operatorname{FR}_{\min}(C_N)
\sim
\frac{2\sqrt2}{\pi}\sqrt N.
\]
Thus, a graph may have a very large spectral radius and a bounded minimum Fourier ratio, or a bounded spectral radius and an unbounded minimum Fourier ratio.

The correct conclusion is not that Fourier complexity is unrelated to spectral graph theory. The graph-energy theorem gives a direct relation with the full adjacency spectrum. Rather, the Fourier ratio records how spectral data interacts with an external additive coordinate system, while the spectral radius and Laplacian gap are intrinsic quantities that do not see that coordinate system.

\section{Universal lower bounds and graph energy}\label{sec:energy}

We begin with a lower bound that uses only the fact that a binary kernel is loop-free. It does not require symmetry.

\begin{proposition}
\label{prop:loop-free-bound}
Let $F:\mathbb Z_N^2\to\{0,1\}$ be nonzero and satisfy $F(x,x)=0$ for every $x$. Let
\[
M
=
\sum_{x,y\in\mathbb Z_N}F(x,y).
\]
Then
\begin{equation}\label{eq:loop-free-max}
\operatorname{FR}(F)
\geq
\max\{
\frac{N}{\sqrt M},
\frac{2\sqrt M}{N}
\}
\geq
\sqrt2.
\end{equation}
\end{proposition}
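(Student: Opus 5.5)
We need two lower bounds on $\operatorname{FR}(F)$, and then the final $\geq\sqrt2$ follows from the arithmetic of the two.

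\textbf{Bound $N/\sqrt M$.} We use the diagonal-sum identity. Inverting the Fourier transform at $(x,x)$ and summing over $x$ gives
\[
0=\sum_{x}F(x,x)=\frac{1}{N}\sum_{m,n}\widehat F(m,n)\sum_x e^{\frac{2\pi i(m+n)x}{N}}=\sum_{m}\widehat F(m,-m).
\]
So the Fourier coefficients on the anti-diagonal $n=-m$ sum to zero.

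We need one more ingredient, which comes from a uniform pointwise bound and the value at the origin. Every coefficient satisfies $|\widehat F(m,n)|\leq M/N$, and $\widehat F(0,0)=M/N$ exactly. Since the anti-diagonal sum vanishes,
\[
\sum_{m\neq0}|\widehat F(m,-m)|\geq |\widehat F(0,0)|=\frac MN .
\]
Hence $\|\widehat F\|_1\geq 2M/N$. Parseval and Booleanity give $\|\widehat F\|_2=\|F\|_2=\sqrt M$, so
\[
\operatorname{FR}(F)\geq \frac{2\sqrt M}{N}.
\]
This is the second entry of the maximum.

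\textbf{Bound $N/\sqrt M$ via duality.} For the first entry we use the pairing of $\widehat F$ with itself. Parseval gives
\[
\|\widehat F\|_2^2=M\leq \|\widehat F\|_\infty\|\widehat F\|_1\leq \frac MN\|\widehat F\|_1 .
\]
Therefore $\|\widehat F\|_1\geq N$, and $\operatorname{FR}(F)\geq N/\sqrt M$. This step uses only that $F$ is Boolean; it does not use the loop-free hypothesis.

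\textbf{Conclusion.} Put $t=\sqrt M/N$. The two bounds are $1/t$ and $2t$, and
\[
\max\{1/t,\,2t\}\geq\sqrt{1/t\cdot 2t}=\sqrt2 .
\]
This is the final inequality in \eqref{eq:loop-free-max}.

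The main obstacle is making sure the anti-diagonal identity is correctly signed with this Fourier normalization. The inversion at $(x,x)$ produces $e^{2\pi i(m+n)x/N}$, so the relevant line is $n=-m$. Summing over $x$ contributes a factor $N$, which cancels the $1/N$ in the inversion formula. That line contains the origin, and the rest of the argument follows from there.
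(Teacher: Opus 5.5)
Your proof is correct and follows the paper's argument step for step. The pointwise bound $|\widehat F|\le M/N$ with $\|\widehat F\|_2^2\le\|\widehat F\|_\infty\|\widehat F\|_1$ gives $N/\sqrt M$, the vanishing anti-diagonal sum $\sum_m\widehat F(m,-m)=0$ set against $\widehat F(0,0)=M/N$ gives $2\sqrt M/N$, and the product of the two bounds equals $2$, which gives $\sqrt2$. The only blemish is cosmetic: your first paragraph is headed ``Bound $N/\sqrt M$'' but actually proves $2\sqrt M/N$, and the pointwise bound you invoke there is not needed for that step.
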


\begin{proof}
Every Fourier coefficient satisfies
\[
|\widehat F(m,n)|
\leq
\frac{M}{N}.
\]
Since $\|\widehat F\|_2^2=M$,
\[
\|\widehat F\|_2^2
\leq
\|\widehat F\|_\infty\|\widehat F\|_1
\]
gives $\|\widehat F\|_1\geq N$. Hence
\[
\operatorname{FR}(F)
\geq
\frac{N}{\sqrt M}.
\]

The diagonal condition gives a second estimate. Fourier inversion and summation over $x$ give
\[
0
=
\sum_{x\in\mathbb Z_N}F(x,x)
=
\sum_{m+n=0}\widehat F(m,n).
\]
Since $\widehat F(0,0)=\frac{M}{N}$,
\[
\sum_{\substack{m+n=0\\(m,n)\neq(0,0)}}
|\widehat F(m,n)|
\geq
\frac{M}{N}.
\]
It follows that $\|\widehat F\|_1\geq\frac{2M}{N}$, and therefore
\[
\operatorname{FR}(F)
\geq
\frac{2\sqrt M}{N}.
\]
The product of the two lower bounds equals $2$, so their maximum is at least $\sqrt2$.
\end{proof}

For symmetric graph kernels, the full adjacency spectrum gives a stronger and more structural estimate. For an $N$ by $N$ matrix $M$, let
\[
\|M\|_*
=
\sum_{j=1}^N s_j(M)
\]
denote the nuclear norm, where $s_j(M)$ are the singular values. We use $\|M\|_{1,\operatorname{ent}}$ for the entrywise $\ell^1$ norm.

\begin{lemma}\label{lem:nuclear-entrywise}
For every complex matrix $M$,
\[
\|M\|_*
\leq
\|M\|_{1,\operatorname{ent}}.
\]
\end{lemma}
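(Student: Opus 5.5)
The plan is to combine the triangle inequality for the nuclear norm with the fact that a matrix unit has nuclear norm one. First I will use that $\|\cdot\|_*$ is a norm on $N\times N$ complex matrices. The cleanest route is the duality formula
\[
\|M\|_*=\max\{|\operatorname{tr}(U^*M)|: U \text{ unitary}\},
\]
which exhibits $\|\cdot\|_*$ as a supremum of absolute values of linear functionals. This follows from the singular value decomposition $M=W\Sigma V^*$ together with the von Neumann trace inequality. The triangle inequality and homogeneity are then immediate from this formula.

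Next I will expand $M$ in the standard basis, $M=\sum_{j,k}M_{jk}E_{jk}$, where $E_{jk}=e_je_k^*$ is the matrix unit. Each $E_{jk}$ has rank one, and its only nonzero singular value is $\|e_j\|\,\|e_k\|=1$. Hence $\|E_{jk}\|_*=1$. The triangle inequality then gives
\[
\|M\|_*\leq\sum_{j,k}|M_{jk}|\,\|E_{jk}\|_*=\|M\|_{1,\operatorname{ent}}.
\]

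To avoid relying on the von Neumann inequality, I can argue directly. Write $M=\sum_j s_j w_jv_j^*$ from the singular value decomposition, and let $U=\sum_j w_jv_j^*$, which is unitary. Then $\|M\|_*=\operatorname{tr}(U^*M)=\sum_{j,k}\overline{U_{jk}}M_{jk}$. Every entry of a unitary matrix has modulus at most $1$, so this sum is at most $\sum_{j,k}|M_{jk}|$. This one-line argument is self-contained, and I would present it as the proof.

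The only step needing care is verifying $\operatorname{tr}(U^*M)=\sum_j s_j$. This follows from $U^*M=\sum_j s_j v_jv_j^*$, whose trace is $\sum_j s_j$ because the $v_j$ are orthonormal. I do not expect any real obstacle.
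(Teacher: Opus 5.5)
Your proposal is correct. Your first plan, the matrix-unit expansion in your second paragraph, is exactly the paper's proof: write $M=\sum_{j,k}M_{jk}E_{jk}$, use $\|E_{jk}\|_*=1$, and apply the triangle inequality for $\|\cdot\|_*$, which the paper takes as known.

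The argument you say you would actually present is the dual form of this. Instead of splitting $M$ into $N^2$ rank-one pieces and invoking subadditivity, you exhibit one norming unitary $U=WV^*$. You then bound the single pairing $\operatorname{tr}(U^*M)=\sum_{j,k}\overline{U_{jk}}M_{jk}$ entrywise, using $|U_{jk}|\leq 1$. This needs nothing beyond the singular value decomposition; it is essentially the standard proof of the nuclear-norm triangle inequality, specialized to this case. So your version is fully self-contained, while the paper's is shorter once the norm property is granted.

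Your route also gives the equality case at no extra cost. Equality forces $\overline{U_{jk}}M_{jk}=|M_{jk}|$ for every entry, so $|U_{jk}|=1$ wherever $M_{jk}\neq 0$. Since the rows and columns of $U$ are unit vectors, $M$ can then have at most one nonzero entry in each row and each column. Conversely, every such matrix attains equality. This is exactly the anti-diagonal support of $\widehat A$ in Theorem~\ref{thm:circulant-optimal} and of $\widehat{\Pi_\lambda}$ in Theorem~\ref{thm:circulant-projectors}, which explains why the energy and multiplicity bounds are attained there.
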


\begin{proof}
Write
\[
M
=
\sum_{j,k}M_{jk}E_{jk}.
\]
Each matrix unit $E_{jk}$ has nuclear norm one. The triangle inequality for the nuclear norm gives
\[
\|M\|_*
\leq
\sum_{j,k}|M_{jk}|\|E_{jk}\|_*
=
\sum_{j,k}|M_{jk}|.
\]
\end{proof}

Let $\lambda_1(A),\ldots,\lambda_N(A)$ be the eigenvalues of the adjacency matrix. The graph energy is
\begin{equation}\label{eq:graph-energy}
\mathcal E(G)
=
\sum_{j=1}^N|\lambda_j(A)|.
\end{equation}

\begin{theorem}[Energy lower bound]\label{thm:energy-lower-bound}
Let $G$ be a nonempty simple graph with $N$ vertices and $s$ edges. Then
\begin{equation}\label{eq:energy-fr-bound}
\operatorname{FR}_{\min}(G)
\geq
\frac{\mathcal E(G)}{\sqrt{2s}}.
\end{equation}
\end{theorem}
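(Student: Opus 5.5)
The plan is to fix an arbitrary labeling $\sigma$ and show that $\operatorname{FR}(A_\sigma)\geq \mathcal E(G)/\sqrt{2s}$. Taking the minimum over $\sigma$ then gives \eqref{eq:energy-fr-bound}. Since $\|\widehat{A_\sigma}\|_2=\|A_\sigma\|_2=\sqrt{2s}$ by Parseval and \eqref{eq:adjacency-frobenius}, it suffices to prove
\[
\|\widehat{A_\sigma}\|_1\geq \mathcal E(G).
\]

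First, I will write the two-dimensional transform in matrix form. Let $U$ be the unitary DFT matrix $U_{mx}=N^{-1/2}e^{-2\pi i mx/N}$. Then \eqref{eq:fourier-transform} reads
\[
\widehat{A_\sigma}(m,n)=\sum_{x,y}U_{mx}A_\sigma(x,y)U_{ny},
\]
so as a matrix
\[
\widehat{A_\sigma}=U A_\sigma U^{T}.
\]
The normalization checks out, because the product of the two factors $N^{-1/2}$ is exactly the factor $1/N$. The matrix $U^T$ is unitary; indeed $U$ is symmetric, so $U^T=U$. Unitary multiplication on either side preserves singular values, hence
\[
\|\widehat{A_\sigma}\|_*=\|A_\sigma\|_*.
\]

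Second, $A_\sigma=P^TAP$ for a permutation matrix $P$, so $A_\sigma$ is real symmetric with the same eigenvalues as $A$. For a Hermitian matrix the singular values are the absolute values of the eigenvalues. Therefore
\[
\|A_\sigma\|_*=\sum_j|\lambda_j(A)|=\mathcal E(G).
\]

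Third, Lemma \ref{lem:nuclear-entrywise} applied to the matrix $\widehat{A_\sigma}$ gives
\[
\mathcal E(G)=\|\widehat{A_\sigma}\|_*\leq\|\widehat{A_\sigma}\|_{1,\operatorname{ent}}=\|\widehat{A_\sigma}\|_1 .
\]
Dividing by $\|\widehat{A_\sigma}\|_2=\sqrt{2s}$ gives the bound for every labeling, and the theorem follows.

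The only step needing care is the identification of $\widehat{A_\sigma}$ with $UA_\sigma U^T$. The sign convention in the exponent and the placement of the $1/N$ normalization must give exactly unitary factors and not merely scalar multiples of unitaries. A scalar multiple would rescale the nuclear norm and break the equality with $\mathcal E(G)$. Everything else is immediate.
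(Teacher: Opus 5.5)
Your proposal is correct and follows the paper's proof exactly: both write $\widehat{A_\sigma}=UA_\sigma U$ with $U$ the unitary DFT matrix, use unitary invariance of singular values to get $\|\widehat{A_\sigma}\|_*=\|A_\sigma\|_*=\mathcal E(G)$, apply Lemma \ref{lem:nuclear-entrywise}, and divide by $\|\widehat{A_\sigma}\|_2=\sqrt{2s}$. Your check that the $1/N$ normalization splits into two exactly unitary factors is a sensible addition but does not change the argument.
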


\begin{proof}
Fix a labeling $\sigma$ and let $A_\sigma$ be the corresponding adjacency matrix. Let
\[
U_{m,x}
=
\frac{1}{\sqrt N}e^{-\frac{2\pi imx}{N}}
\]
be the unitary Fourier matrix. Since $U$ is symmetric, the two-dimensional transform \eqref{eq:fourier-transform} can be written as
\[
\widehat{A_\sigma}
=
UA_\sigma U.
\]
Left and right multiplication by unitary matrices preserves singular values. Consequently,
\[
\|\widehat{A_\sigma}\|_*
=
\|A_\sigma\|_*.
\]
The matrix $A_\sigma$ is real symmetric, so its singular values are the absolute values of its eigenvalues. Relabeling does not change those eigenvalues, and therefore
\[
\|\widehat{A_\sigma}\|_*
=
\mathcal E(G).
\]
Lemma \ref{lem:nuclear-entrywise}, Parseval's identity, and \eqref{eq:adjacency-frobenius} now give
\[
\operatorname{FR}(A_\sigma)
=
\frac{\|\widehat{A_\sigma}\|_{1,\operatorname{ent}}}{\|\widehat{A_\sigma}\|_2}
\geq
\frac{\mathcal E(G)}{\sqrt{2s}}.
\]
Taking the minimum over $\sigma$ proves the result.
\end{proof}

There is also a lower bound arising only from the number of nonzero entries.

\begin{proposition}\label{prop:support-lower-bound}
Let $G$ be a nonempty simple graph with $N$ vertices and $s$ edges. For every labeling $\sigma$,
\begin{equation}\label{eq:support-lower}
\operatorname{FR}(A_\sigma)
\geq
\frac{N}{\sqrt{2s}}.
\end{equation}
Consequently,
\begin{equation}\label{eq:combined-lower}
\operatorname{FR}_{\min}(G)
\geq
\max\{
\frac{\mathcal E(G)}{\sqrt{2s}},
\frac{N}{\sqrt{2s}}
\}.
\end{equation}
\end{proposition}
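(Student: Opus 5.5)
This is the first half of Proposition \ref{prop:loop-free-bound} applied to the adjacency kernel; we can also argue directly. Fix a labeling $\sigma$ and set $F=A_\sigma$, which takes values in $\{0,1\}$ and has
\[
M=\sum_{x,y}A_\sigma(x,y)=2s
\]
by \eqref{eq:adjacency-frobenius}. The only ingredient is a uniform bound on the Fourier coefficients. By the triangle inequality in \eqref{eq:fourier-transform},
\[
|\widehat{A_\sigma}(m,n)|\leq\frac{1}{N}\sum_{x,y}A_\sigma(x,y)=\frac{2s}{N}
\]
for every $(m,n)$, so $\|\widehat{A_\sigma}\|_\infty\leq 2s/N$.

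Next we combine this with Parseval. We have $\|\widehat{A_\sigma}\|_2^2=\|A_\sigma\|_2^2=2s$, and Hölder's inequality gives
\[
\|\widehat{A_\sigma}\|_2^2\leq\|\widehat{A_\sigma}\|_\infty\|\widehat{A_\sigma}\|_1 .
\]
Hence $2s\leq \frac{2s}{N}\|\widehat{A_\sigma}\|_1$, that is, $\|\widehat{A_\sigma}\|_1\geq N$. Dividing by $\|\widehat{A_\sigma}\|_2=\sqrt{2s}$ yields \eqref{eq:support-lower}. The loop-free hypothesis is not needed for this half, since the argument uses only that the kernel is Boolean and nonzero.

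For \eqref{eq:combined-lower}, note that \eqref{eq:support-lower} holds for every $\sigma$, so it holds for the minimizing labeling and hence for $\operatorname{FR}_{\min}(G)$. Theorem \ref{thm:energy-lower-bound} supplies the energy bound, and the maximum of the two bounds follows. There is no real obstacle. The only points to check are the normalization $1/N$ in the Fourier transform, which makes the $\ell^\infty$ bound $2s/N$ rather than $2s$, and the identification $M=2s$ for ordered edges.
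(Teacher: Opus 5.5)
Your proposal is correct and follows the paper's proof. The paper simply cites the first estimate of Proposition \ref{prop:loop-free-bound} with $M=2s$, which is the $\ell^\infty$--$\ell^1$ H\"older argument you reproduce, and the combined bound \eqref{eq:combined-lower} then comes from adding Theorem \ref{thm:energy-lower-bound}.
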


\begin{proof}
This is the first estimate in Proposition \ref{prop:loop-free-bound} with $M=2s$.
\end{proof}

\begin{corollary}[Universal graph bound]\label{cor:sqrt-two}
Every nonempty simple graph satisfies
\begin{equation}\label{eq:sqrt-two}
\operatorname{FR}_{\min}(G)
\geq
\sqrt2.
\end{equation}
The bound is sharp for every even $N$.
\end{corollary}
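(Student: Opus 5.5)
The lower bound is immediate from Proposition \ref{prop:loop-free-bound}. Fix any labeling $\sigma$. The kernel $A_\sigma$ is $\{0,1\}$-valued, nonzero because $G$ is nonempty, and vanishes on the diagonal because $G$ is simple. The proposition therefore gives $\operatorname{FR}(A_\sigma)\geq\sqrt2$ for every $\sigma$, and taking the minimum over labelings yields \eqref{eq:sqrt-two}.

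For sharpness at every even $N$, we need a graph on $N$ vertices and a labeling with ratio exactly $\sqrt2$. Equality in the proposition forces $N/\sqrt M=2\sqrt M/N$, that is $M=N^2/2$, or $s=N^2/4$. This points to $K_{\frac N2,\frac N2}$ with the parity labeling: vertices $x$ and $y$ are adjacent exactly when $x-y$ is odd. Writing $\chi(x)=(-1)^x$, which is a character of $\mathbb Z_N$ since $N$ is even, we have
\[
A(x,y)=\tfrac12\bigl(1-\chi(x)\chi(y)\bigr).
\]
The only step needing care is computing the Fourier transform under the normalization \eqref{eq:fourier-transform}. The constant $\tfrac12$ contributes $\widehat{\cdot}(0,0)=N/2$. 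The product $\chi(x)\chi(y)=e^{2\pi i(\frac N2 x+\frac N2 y)/N}$ contributes $\widehat{\cdot}(\tfrac N2,\tfrac N2)=-N/2$.

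So $\widehat A$ has exactly two nonzero entries, each of modulus $N/2$. This gives $\|\widehat A\|_1=N$ and $\|\widehat A\|_2=N/\sqrt2$, which agrees with $\sqrt{2s}=\sqrt{N^2/2}$, so the ratio is $\sqrt2$. Combined with the lower bound, $\operatorname{FR}_{\min}(K_{\frac N2,\frac N2})=\sqrt2$. Equivalently, by \eqref{eq:support-upper-bound} two-frequency support gives $\operatorname{FR}\leq\sqrt2$.

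As a consistency check, the energy bound of Theorem \ref{thm:energy-lower-bound} is also tight here. The eigenvalues of $K_{\frac N2,\frac N2}$ are $\pm N/2$ with zeros otherwise, so $\mathcal E=N=\sqrt{2s}\cdot\sqrt2$.
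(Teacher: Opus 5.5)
Your proof is correct, but you reach the lower bound by a different route than the paper. You read it off Proposition~\ref{prop:loop-free-bound}, whose two estimates $N/\sqrt M$ and $2\sqrt M/N$ have product $2$ and use only that $A_\sigma$ is Boolean, nonzero and loop-free.

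The paper argues spectrally instead. Since $\operatorname{tr}A=0$, the positive eigenvalues $p_i$ and the moduli $q_j$ of the negative ones each sum to $\mathcal E(G)/2$, so
\[
2s=\sum_i p_i^2+\sum_j q_j^2\leq\Bigl(\sum_i p_i\Bigr)^2+\Bigl(\sum_j q_j\Bigr)^2=\frac{\mathcal E(G)^2}{2},
\]
that is, $\mathcal E(G)\geq2\sqrt s$. Theorem~\ref{thm:energy-lower-bound} then yields $\sqrt2$.

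Your route is more elementary, since it needs no nuclear norm and no spectrum. It also ignores symmetry, so it covers directed loop-free graphs too. The paper's route presents $\sqrt2$ as a weakening of the energy bound, and it produces the inequality $\mathcal E(G)\geq2\sqrt s$, which Proposition~\ref{prop:energy-equality} needs for its equality characterization.

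The two routes combine nicely. Your remark that $\operatorname{FR}(A_\sigma)=\sqrt2$ forces $s=N^2/4$ can be paired with the equality case of Proposition~\ref{prop:energy-equality}: a single $K_{a,b}$ plus isolated vertices, with $ab=s$ and $a+b\leq N$. Together these force $a=b=N/2$. So $K_{\frac N2,\frac N2}$ is the only extremal graph, and $\sqrt2$ is never attained for odd $N$.

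Your sharpness computation coincides with the direct two-coefficient calculation the paper gives right after Corollary~\ref{cor:complete-turan}. There the value is formally obtained from the circulant energy formula.
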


\begin{proof}
Let $p_1,\ldots,p_a>0$ be the positive adjacency eigenvalues and let $-q_1,\ldots,-q_b<0$ be the negative adjacency eigenvalues. Since the adjacency matrix has trace zero,
\[
\sum_i p_i
=
\sum_j q_j
=
\frac{\mathcal E(G)}{2}.
\]
Also,
\[
\sum_i p_i^2+
\sum_jq_j^2
=
2s.
\]
Therefore
\[
2s
\leq
\left(\sum_i p_i\right)^2+
\left(\sum_jq_j\right)^2
=
\frac{\mathcal E(G)^2}{2}.
\]
Thus $\mathcal E(G)\geq2\sqrt s$, and Theorem \ref{thm:energy-lower-bound} gives \eqref{eq:sqrt-two}. Sharpness follows from the parity-labeled complete bipartite graph in Corollary \ref{cor:complete-turan}.
\end{proof}

\begin{proposition}[Equality in the energy estimate]\label{prop:energy-equality}
Let $G$ be a nonempty simple graph with $s$ edges. Then
\[
\mathcal E(G)
\geq
2\sqrt s.
\]
Equality holds if and only if $G$ consists of one complete bipartite component together with any number of isolated vertices.
\end{proposition}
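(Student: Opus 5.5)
The inequality is already contained in the proof of Corollary \ref{cor:sqrt-two}, so the work is to track when each step there is an equality. Write the positive eigenvalues as $p_1,\dots,p_a$ and the negative ones as $-q_1,\dots,-q_b$. The trace-zero identity gives $\sum p_i=\sum q_j=\mathcal E(G)/2$, and $\operatorname{tr}A^2=2s$ gives $\sum p_i^2+\sum q_j^2=2s$. The only inequality used is
\[
\sum_i p_i^2\leq\Bigl(\sum_i p_i\Bigr)^2,\qquad \sum_j q_j^2\leq\Bigl(\sum_j q_j\Bigr)^2 .
\]
Each of these is an equality exactly when the cross terms vanish, that is, when there is at most one positive and at most one negative eigenvalue. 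Since $G$ is nonempty, $\operatorname{tr}A^2=2s>0$ together with trace zero forces both signs to occur. So $\mathcal E(G)=2\sqrt s$ holds if and only if $A$ has exactly one positive eigenvalue, exactly one negative eigenvalue, and all other eigenvalues zero.

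For the ``if'' direction, take $G=K_{a,b}\cup\overline{K_t}$ with $a,b\ge1$. The adjacency matrix of $K_{a,b}$ has rank $2$ and spectrum $\pm\sqrt{ab}$ together with zeros, and isolated vertices add only zeros. Hence $\mathcal E(G)=2\sqrt{ab}=2\sqrt s$, since $s=ab$.

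The ``only if'' direction is the main step. I would characterize graphs with exactly one positive and one negative eigenvalue.
\begin{itemize}
\item The first step removes isolated vertices and shows the remaining graph is connected. If two components each contained an edge, each would contribute a positive eigenvalue, namely its spectral radius, and these would give two positive eigenvalues.
\item The second step shows the remaining connected graph $H$ is complete bipartite, using Cauchy interlacing for induced subgraphs. $H$ has exactly one positive eigenvalue, so every induced subgraph has at most one positive eigenvalue. The triangle $K_3$ has spectrum $2,-1,-1$, so it is allowed by this count. I would therefore use the negative-eigenvalue count as well: $H$ has exactly one negative eigenvalue, so every induced subgraph has at most one. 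Since $K_3$ has two negative eigenvalues, $H$ is triangle-free. The graph $K_2\cup K_1$ on three vertices has spectrum $1,-1,0$, which is allowed, so I need more. The path $P_4$ has spectrum $\pm\frac{1\pm\sqrt5}{2}$, so it has two positive eigenvalues, and $H$ is $P_4$-free. The graph $2K_2$ also has two positive eigenvalues, so $H$ is $2K_2$-free.
\item The third step concludes: a connected graph with no $P_4$ and no $K_3$ is complete bipartite. Connected $P_4$-free graphs (cographs) are joins of smaller graphs. Triangle-freeness forces each side of the join to have no edges. Hence $H=K_{a,b}$.
\end{itemize}
I expect this third step to be the main obstacle. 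If I want to avoid quoting the cograph structure theorem, I would argue directly. $H$ is bipartite, because an induced odd cycle $C_{2k+1}$ with $k\ge2$ contains an induced $P_4$, and triangles are already excluded. If two vertices on opposite sides of the bipartition were nonadjacent, a shortest path between them in the connected graph $H$ would have length at least $3$, and its first four vertices would form an induced $P_4$, a contradiction. So every cross pair is an edge and $H=K_{a,b}$.
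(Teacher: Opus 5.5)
Your proof is correct. Its first half matches the paper's: equality forces exactly one positive and one negative eigenvalue, and hence a single component containing edges. The routes diverge in how you exclude $K_3$ and induced $P_4$, and in the combinatorial finish.

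You use Cauchy interlacing to bound the inertia of induced subgraphs. This obliges you to track both the positive and the negative count, since $K_3$ survives the positive count. The paper uses only the coarser consequence that $A$ has rank two. Every principal submatrix then has rank at most two, while $K_3$ and $P_4$ have nonsingular adjacency matrices (ranks three and four), so both are excluded at once by elementary linear algebra. Your inertia argument is the finer tool; it is the mechanism behind Smith's theorem that a graph with exactly one positive eigenvalue is complete multipartite apart from isolated vertices. The rank argument is all that is needed here, however.

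For the last step, the paper fixes an edge $uv$ and shows directly that $N(u)$ and $N(v)$ partition the vertex set with every cross pair adjacent. Your bipartiteness-plus-geodesic argument is equally short. It does use the fact that a non-bipartite graph contains an induced odd cycle (a shortest odd cycle is chordless), and you should state that explicitly. The cograph route is also valid, though it imports a structure theorem that neither direct argument requires.
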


\begin{proof}
The inequality was proved in Corollary \ref{cor:sqrt-two}. Equality forces equality in both estimates
\[
\sum_i p_i^2
\leq
\left(\sum_i p_i\right)^2
\]
and
\[
\sum_jq_j^2
\leq
\left(\sum_jq_j\right)^2.
\]
Thus, the nonzero adjacency spectrum consists of one positive and one negative eigenvalue, so the adjacency matrix has rank two. There can be only one component containing an edge.

Let $H$ be that component. A connected simple graph of adjacency rank two is complete bipartite. One direct argument is as follows. The graph contains no triangle and no induced path on four vertices, because the corresponding adjacency matrices have ranks three and four. Fix an edge $uv$. Every vertex is adjacent to $u$ or to $v$, since otherwise a shortest path to the set $\{u,v\}$ produces an induced path on four vertices. No vertex is adjacent to both $u$ and $v$, since the graph is triangle-free. If a vertex adjacent to $u$ and a vertex adjacent to $v$ were not adjacent, those four vertices would form an induced path on four vertices. Hence $H$ is complete bipartite.

Conversely, $K_{a,b}$ has nonzero adjacency eigenvalues $\sqrt{ab}$ and $-\sqrt{ab}$. Since it has $ab$ edges, its energy equals $2\sqrt s$. Isolated vertices do not change either quantity.
\end{proof}

\section{Circulant graphs and exact computations}\label{sec:circulant}

Let $S\subset\mathbb Z_N$ be nonempty and satisfy $0\notin S$ and $S=-S$. The circulant graph $\operatorname{Cay}(\mathbb Z_N,S)$ has adjacency relation
\[
x\sim y
\text{if and only if}
y-x\in S.
\]
Its natural labeling is the identity labeling of $\mathbb Z_N$.

\begin{theorem}\label{thm:circulant-optimal}
Let $G=\operatorname{Cay}(\mathbb Z_N,S)$ be a circulant graph. Under the natural labeling,
\begin{equation}\label{eq:circulant-fr-energy}
\operatorname{FR}(A)
=
\frac{\mathcal E(G)}{\sqrt{2s}}.
\end{equation}
Consequently, the natural labeling is globally optimal and
\begin{equation}\label{eq:circulant-min}
\operatorname{FR}_{\min}(G)
=
\frac{\mathcal E(G)}{\sqrt{2s}}.
\end{equation}
\end{theorem}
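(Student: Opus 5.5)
I will compute the two-dimensional Fourier transform of the circulant adjacency kernel directly under the natural labeling. Here $A(x,y)=1_S(y-x)$. Substituting $y=x+h$ in \eqref{eq:fourier-transform} gives
\[
\widehat A(m,n)
=
\frac{1}{N}\sum_{x\in\mathbb Z_N}e^{-\frac{2\pi i(m+n)x}{N}}
\sum_{h\in S}e^{-\frac{2\pi i nh}{N}}.
\]
The inner $x$-sum equals $N$ if $m+n=0$ and $0$ otherwise. Hence $\widehat A$ is supported on the anti-diagonal $\{(-n,n)\}$, where
\[
\widehat A(-n,n)=\sum_{h\in S}e^{-\frac{2\pi i nh}{N}}=:\mu_n .
\]
Since $S=-S$, each $\mu_n$ is real.

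The next step is to identify the $\mu_n$ with the adjacency eigenvalues. The characters $\chi_n(x)=e^{\frac{2\pi i nx}{N}}$, $n\in\mathbb Z_N$, form an orthogonal basis of $\mathbb C^{\mathbb Z_N}$. A one-line computation gives
\[
(A\chi_n)(x)=\sum_{h\in S}\chi_n(x+h)=\Big(\sum_{h\in S}e^{\frac{2\pi i nh}{N}}\Big)\chi_n(x)=\mu_{-n}\chi_n(x).
\]
Here $\mu_{-n}=\mu_n$ by the symmetry of $S$. So the multiset $\{\mu_n:n\in\mathbb Z_N\}$ is exactly the adjacency spectrum, counted with multiplicity, because the $N$ characters are linearly independent. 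Consequently
\[
\|\widehat A\|_1=\sum_n|\mu_n|=\mathcal E(G).
\]
By Parseval and \eqref{eq:adjacency-frobenius}, $\|\widehat A\|_2=\sqrt{2s}$. This proves \eqref{eq:circulant-fr-energy}.

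Finally, Theorem~\ref{thm:energy-lower-bound} gives $\operatorname{FR}(A_\sigma)\ge \mathcal E(G)/\sqrt{2s}$ for every labeling $\sigma$. The natural labeling attains this value, so it is a minimizer, and \eqref{eq:circulant-min} follows.

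I do not expect a real obstacle. The only point requiring care is the multiplicity bookkeeping: the $\ell^1$ norm of $\widehat A$ must count each eigenvalue exactly as often as it occurs in the spectrum. This holds because the $N$ anti-diagonal entries correspond bijectively to the $N$ characters, which form an eigenbasis.
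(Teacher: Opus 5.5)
Your proof is correct and follows essentially the same route as the paper. You compute $\widehat A$ directly, find it is supported on the anti-diagonal $m+n=0$ with entries $\sum_{h\in S}e^{-\frac{2\pi i nh}{N}}$, identify these entries with the adjacency eigenvalues, and then apply Parseval and Theorem~\ref{thm:energy-lower-bound}; your explicit character computation, with its multiplicity bookkeeping, simply spells out what the paper states as eigenvalues ``up to an immaterial reindexing.''
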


\begin{proof}
Write $A(x,y)=1_S(y-x)$. With $t=y-x$,
\begin{align*}
\widehat A(m,n)
&=
\frac{1}{N}
\sum_{x,t\in\mathbb Z_N}
1_S(t)e^{-\frac{2\pi i(mx+n(x+t))}{N}}\\
&=
\delta_{m+n,0}
\sum_{t\in S}e^{-\frac{2\pi int}{N}}.
\end{align*}
The quantities
\[
\lambda_n
=
\sum_{t\in S}e^{-\frac{2\pi int}{N}}
\]
are the adjacency eigenvalues, up to an immaterial reindexing. Thus, $\widehat A$ is supported on the anti-diagonal $m+n=0$, and its entries there are precisely the eigenvalues. Hence
\[
\|\widehat A\|_1
=
\sum_{n\in\mathbb Z_N}|\lambda_n|
=
\mathcal E(G).
\]
Together with $\|A\|_2=\sqrt{2s}$, this proves \eqref{eq:circulant-fr-energy}. The lower bound in Theorem \ref{thm:energy-lower-bound} proves global optimality.
\end{proof}

\begin{remark}[Fourier algebra and group lifts]\label{rem:group-lift}
Let $a:\mathbb Z_N\to\mathbb C$, and use the Haar-normalized one-dimensional Fourier transform
\[
\widetilde a(\xi)
=
\frac{1}{N}
\sum_{x\in\mathbb Z_N}
a(x)e^{-\frac{2\pi i\xi x}{N}}.
\]
Its Fourier algebra norm is
\[
\|a\|_{A(\mathbb Z_N)}
=
\sum_{\xi\in\mathbb Z_N}|\widetilde a(\xi)|.
\]
If $A(x,y)=a(x-y)$ is the corresponding group-lift matrix, then the general group-lift identity gives
\begin{equation}\label{eq:group-lift-trace}
\|A\|_*
=
N\|a\|_{A(\mathbb Z_N)}
\end{equation}
\cite[Proposition 3.11]{HambardzumyanHatamiHatami}. For a circulant graph, this identity reads
\[
\|a\|_{A(\mathbb Z_N)}
=
\frac{\mathcal E(G)}{N}.
\]
Theorem \ref{thm:circulant-optimal} uses this trace-norm structure together with the two-dimensional Fourier representation to show that the natural cyclic labeling attains the graph-energy lower bound among all cyclic labelings of the abstract graph.
\end{remark}

The same calculation has a group-relative form. If $H$ is any finite abelian group and $G=\operatorname{Cay}(H,S)$, then the Fourier transform on $H\times H$ gives equality between the Fourier ratio and the graph-energy lower bound under the natural group labeling. The invariant in this paper is defined using cyclic coordinates, so Theorem \ref{thm:circulant-optimal} is stated for $H=\mathbb Z_N$. For a noncyclic group, one may instead define an invariant relative to the chosen group structure.

\subsection{Complete and balanced Tur\'an graphs}

\begin{corollary}\label{cor:complete-turan}
For $N\geq2$,
\begin{equation}\label{eq:complete-fr}
\operatorname{FR}_{\min}(K_N)
=
2\sqrt{1-\frac{1}{N}}.
\end{equation}
If $2\leq k\leq N$, $k$ divides $N$, and $T(N,k)$ is the balanced complete $k$-partite graph, then
\begin{equation}\label{eq:turan-fr}
\operatorname{FR}_{\min}(T(N,k))
=
2\sqrt{1-\frac{1}{k}}.
\end{equation}
In particular, for even $N$,
\begin{equation}\label{eq:bipartite-parity}
\operatorname{FR}_{\min}
(K_{\frac{N}{2},\frac{N}{2}})
=
\sqrt2.
\end{equation}
\end{corollary}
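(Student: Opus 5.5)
The plan is to realize each graph as a circulant graph and apply Theorem \ref{thm:circulant-optimal}, which reduces everything to computing $\mathcal E(G)/\sqrt{2s}$.

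For $K_N$, take $S=\mathbb Z_N\setminus\{0\}$, so $K_N=\operatorname{Cay}(\mathbb Z_N,S)$. The adjacency eigenvalues are $N-1$ once and $-1$ with multiplicity $N-1$. Hence $\mathcal E(K_N)=2(N-1)$, while $2s=N(N-1)$. This gives
\[
\frac{\mathcal E}{\sqrt{2s}}=\frac{2(N-1)}{\sqrt{N(N-1)}}=2\sqrt{1-\tfrac1N}.
\]

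For $T(N,k)$ with $k\mid N$, I will place the parts as the residue classes modulo $k$. Concretely, take
\[
S=\{t\in\mathbb Z_N: t\not\equiv 0 \pmod k\}.
\]
Two vertices are then adjacent exactly when they lie in different classes. This condition is well defined on $\mathbb Z_N$ because $k\mid N$, and $S=-S$, $0\notin S$. Thus $T(N,k)=\operatorname{Cay}(\mathbb Z_N,S)$.

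The spectrum can be read off either from $A=J-I_k\otimes J_{N/k}$ (up to permutation) or directly from the character sums $\lambda_n=\sum_{t\in S}e^{-2\pi i nt/N}$. It consists of:
\begin{itemize}
\item $N-N/k$, with multiplicity $1$;
\item $-N/k$, with multiplicity $k-1$;
\item $0$, with the remaining multiplicity.
\end{itemize}
Therefore $\mathcal E=2N(1-1/k)$. Since the graph is $N(1-1/k)$-regular, $2s=N^2(1-1/k)$. The ratio is then $2\sqrt{1-1/k}$.

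Setting $k=2$ gives $\sqrt2$ for $K_{N/2,N/2}$, realized by the parity labeling. This also settles the sharpness claim deferred from Corollary \ref{cor:sqrt-two}.

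The only step requiring care is the spectrum of $T(N,k)$, together with the check that the residue-class description is a genuine circulant. I would verify the spectrum via the character sum: $\sum_{t\in\mathbb Z_N}e^{-2\pi int/N}$ equals $N\delta_{n,0}$, and the sum over multiples of $k$ equals $\frac{N}{k}\,1_{\{n\equiv0 \bmod N/k\}}$. Subtracting the second from the first gives the eigenvalues $N-N/k$ at $n=0$, $-N/k$ at the $k-1$ nonzero multiples of $N/k$, and $0$ otherwise. Everything else is arithmetic.
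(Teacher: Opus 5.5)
Your proposal is correct and follows essentially the same route as the paper: you realize $K_N$ and $T(N,k)$ as circulant graphs (the latter via residue classes modulo $k$), compute $\mathcal E(G)$ and $2s$, and apply Theorem \ref{thm:circulant-optimal}. Your character-sum derivation of the spectrum of $T(N,k)$, and your check that the residue-class description is well defined because $k\mid N$, supply details the paper states without computation.
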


\begin{proof}
The complete graph is circulant with generating set $\mathbb Z_N\setminus\{0\}$. Its adjacency spectrum is $N-1$ with multiplicity one and $-1$ with multiplicity $N-1$. Thus,
\[
\mathcal E(K_N)=2(N-1)
\]
and
\[
2s=N(N-1).
\]
Formula \eqref{eq:complete-fr} follows from Theorem \ref{thm:circulant-optimal}.

For the Tur\'an graph, label the $k$ parts by congruence classes modulo $k$. Then $T(N,k)$ is a circulant graph. Its adjacency eigenvalues are
\[
N\left(1-\frac{1}{k}\right)
\]
with multiplicity one,
\[
-\frac{N}{k}
\]
with multiplicity $k-1$, and $0$ with multiplicity $N-k$. Thus,
\[
\mathcal E(T(N,k))
=
2N\left(1-\frac{1}{k}\right),
\]
while
\[
2s
=
N^2\left(1-\frac{1}{k}\right).
\]
Theorem \ref{thm:circulant-optimal} gives \eqref{eq:turan-fr}. Taking $k=2$ gives \eqref{eq:bipartite-parity}.
\end{proof}

The parity description is particularly simple. If the two parts are the even and odd residue classes, then
\[
A(x,y)
=
\frac{1-(-1)^{x-y}}{2}.
\]
The Fourier transform has only two nonzero coefficients, at $(0,0)$ and $(\frac{N}{2},\frac{N}{2})$. This shows directly that the Fourier ratio equals $\sqrt2$.

The complete graph and the balanced complete bipartite graph illustrate why edge density is not the governing quantity. The complete graph has the largest possible number of edges but Fourier ratio approaching $2$. The parity-labeled complete bipartite graph has half of the ordered pairs as edges and attains the universal lower bound $\sqrt2$. In both cases, the edge relation has a short additive description.

\subsection{Cycles}

The cycle $C_N$ is the circulant graph generated by $\{1,-1\}$. Its adjacency eigenvalues are
\[
2\cos\left(\frac{2\pi j}{N}\right),
0\leq j<N.
\]

\begin{corollary}\label{cor:cycle-exact}
For $N\geq3$,
\begin{equation}\label{eq:cycle-fr-exact}
\operatorname{FR}_{\min}(C_N)
=
\frac{\mathcal E(C_N)}{\sqrt{2N}},
\end{equation}
where
\begin{equation}\label{eq:cycle-energy}
\mathcal E(C_N)
=
\begin{cases}
2\csc(\frac{\pi}{2N}),&N\text{ is odd},\\
4\cot(\frac{\pi}{N}),&N\equiv0\pmod4,\\
4\csc(\frac{\pi}{N}),&N\equiv2\pmod4.
\end{cases}
\end{equation}
In particular,
\begin{equation}\label{eq:cycle-asymptotic}
\operatorname{FR}_{\min}(C_N)
=
\frac{2\sqrt2}{\pi}\sqrt N+o(\sqrt N).
\end{equation}
\end{corollary}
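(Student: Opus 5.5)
The plan is to apply Theorem \ref{thm:circulant-optimal} directly. The cycle $C_N$ is $\operatorname{Cay}(\mathbb Z_N,\{1,-1\})$, which is a legitimate circulant for $N\geq3$ (the generators $1$ and $-1$ are distinct and nonzero), and it has $s=N$ edges. Formula \eqref{eq:cycle-fr-exact} is then immediate. What remains is to evaluate
\[
\mathcal E(C_N)=\sum_{j=0}^{N-1}\left|2\cos\left(\frac{2\pi j}{N}\right)\right|
\]
in closed form, and then read off the asymptotics.

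To evaluate the energy, I will write $\theta=\frac{2\pi}{N}$ and note that $|\cos(j\theta)|$ is the real part of $\pm e^{ij\theta}$. The sign is $+$ exactly when $j\theta$ lies in an arc of the form $(-\frac{\pi}{2},\frac{\pi}{2})$ mod $2\pi$. The sum therefore splits into two geometric sums of unit vectors over consecutive $j$ in two antipodal arcs. These two contributions are equal, since $j\mapsto j+\frac N2$ (or the reflection $j\mapsto N-j$ combined with the cosine symmetry) maps one arc to the other. The same symmetry also handles the odd case, where the set of angles is not invariant under rotation by $\pi$.

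Concretely, I will sum $\sum_{|j|\leq K}\cos(j\theta)$ using the Dirichlet kernel identity
\[
\sum_{|j|\leq K}\cos(j\theta)=\frac{\sin\left((K+\frac12)\theta\right)}{\sin(\frac{\theta}{2})},
\]
and then do the case analysis.

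\textbf{$N\equiv0\pmod 4$.} The angles $j\theta=\pm\frac{\pi}{2}$ occur and contribute zero. The positive arc is $|j|\leq\frac N4-1$, which gives $\frac{\sin(\frac{\pi}{2}-\frac{\pi}{N})}{\sin(\frac{\pi}{N})}=\cot(\frac{\pi}{N})$. Doubling for the negative arc and multiplying by $2$ gives $4\cot(\frac{\pi}{N})$.

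\textbf{$N\equiv2\pmod4$.} Take $K=\frac{N-2}{4}$. Then $(K+\frac12)\theta=\frac{\pi}{2}$, so the arc sum is $\csc(\frac{\pi}{N})$ and the energy is $4\csc(\frac{\pi}{N})$.

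\textbf{$N$ odd.} The antipodal symmetry fails, so I will instead compute $\sum_j|\cos(j\theta)|$ using $\cos(j\theta)=\cos(2j\cdot\frac{\pi}{N})$. Since $j\mapsto 2j$ is a bijection mod $N$ when $N$ is odd, and $|\cos|$ has period $\pi$, the multiset $\{|\cos(\frac{2\pi j}{N})|\}$ equals $\{|\cos(\frac{\pi k}{N})|:0\leq k<N\}$. For odd $N$ this in turn equals the set of values of $|\cos(\frac{\pi k}{2N})|$ over a full period of $2N$ points, evaluated on the appropriate parity class. I expect this bookkeeping to be the main obstacle. The cleanest route is probably to write the sum as a sum of $\cos(\frac{\pi k}{N})$ over $k$ with $|k|<\frac N2$, which is a Dirichlet sum with $K=\frac{N-1}{2}$ and half-angle $\frac{\pi}{2N}$. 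That gives $\frac{\sin(\frac{\pi}{2})}{\sin(\frac{\pi}{2N})}=\csc(\frac{\pi}{2N})$, hence energy $2\csc(\frac{\pi}{2N})$. I will check this against $N=3$: the spectrum is $2,-1,-1$, so the energy is $4=2\csc(\frac{\pi}{6})$, as required.

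Finally, for the asymptotics I will use $\csc x,\cot x\sim\frac1x$. In every case this gives $\mathcal E(C_N)=\frac{4N}{\pi}+O(1)$, since $2\cdot\frac{2N}{\pi}=4\cdot\frac{N}{\pi}$. Dividing by $\sqrt{2N}$ yields $\frac{4}{\pi\sqrt2}\sqrt N=\frac{2\sqrt2}{\pi}\sqrt N$ plus $O(N^{-1/2})$, which proves \eqref{eq:cycle-asymptotic}.
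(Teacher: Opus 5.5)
Your proposal is correct and follows the paper's route. You apply Theorem \ref{thm:circulant-optimal} with $s=N$, then evaluate $\sum_j|2\cos(2\pi j/N)|$ by splitting at the quarter points and summing cosines in closed form; for odd $N$, your bijection $j\mapsto 2j$, which reduces the sum to a single positive Dirichlet sum at angle $\pi/N$, is a tidy variant of the paper's splitting. One correction: your overview's remark that the antipodal or reflection symmetry also handles odd $N$ is wrong, since for odd $N$ neither map sends the positive arc to the negative one. The two arc contributions are still equal, because $\sum_j\cos(2\pi j/N)=0$, and your odd-case computation does not use that remark anyway.
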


\begin{proof}
The first formula follows from Theorem \ref{thm:circulant-optimal}, since $C_N$ has $N$ edges. To evaluate the energy, use
\[
\sum_{j=a}^b\cos(j\theta)
=
\frac{
\sin(\frac{(b-a+1)\theta}{2})
\cos(\frac{(a+b)\theta}{2})
}{
\sin(\frac{\theta}{2})
}.
\]
The signs of $\cos(\frac{2\pi j}{N})$ change at the two quarter points of the cyclic interval. Splitting the sum at those points and applying the displayed identity gives the three cases in \eqref{eq:cycle-energy}. In every case,
\[
\mathcal E(C_N)
=
\frac{4N}{\pi}+o(N),
\]
and \eqref{eq:cycle-asymptotic} follows.
\end{proof}

The recovery theorem therefore gives a sufficient sample count of order $N$, up to the accuracy and logarithmic factors, for the naturally labeled cycle. This is one of the clearest examples in which a sparse graph has an efficiently recoverable additive representation but does not have bounded Fourier ratio.

\subsection{A labeling-sensitive complete bipartite example}

The parity labeling of $K_{\frac{N}{2},\frac{N}{2}}$ realizes the minimum $\sqrt2$. We now compute a very different labeling of the same abstract graph. This calculation also shows why the Fourier transform is not supported on a union of a few lines.

\begin{proposition}[Interval labeling of a balanced complete bipartite graph]\label{prop:interval-bipartite}
Let $N$ be even and set
\[
I
=
\{0,1,\ldots,\frac{N}{2}-1\}
\]
and
\[
I^c
=
\mathbb Z_N\setminus I.
\]
Let
\[
B_N(x,y)
=
1_I(x)1_{I^c}(y)+1_{I^c}(x)1_I(y)
\]
be the adjacency kernel of $K_{\frac{N}{2},\frac{N}{2}}$ under the interval labeling. Define
\[
a_m
=
\sum_{x=0}^{\frac{N}{2}-1}
e^{-\frac{2\pi imx}{N}}
\]
and
\[
S_N
=
\sum_{m\neq0}|a_m|.
\]
Then
\begin{equation}\label{eq:interval-exact}
\operatorname{FR}(B_N)
=
\frac{1}{\sqrt2}
+
\frac{2\sqrt2}{N^2}S_N^2.
\end{equation}
Moreover,
\begin{equation}\label{eq:interval-asymptotic}
S_N
=
\frac{N}{\pi}\log N+O(N),
\end{equation}
and therefore
\begin{equation}\label{eq:interval-fr-asymptotic}
\operatorname{FR}(B_N)
=
\frac{2\sqrt2}{\pi^2}(\log N)^2+O(\log N).
\end{equation}
\end{proposition}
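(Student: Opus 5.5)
The plan is to compute $\widehat{B_N}$ exactly by exploiting the tensor structure of $B_N$, which reduces the problem to the one-dimensional sums $a_m$. I will then estimate $S_N$ by comparing a cosecant sum with a harmonic sum.

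\emph{Step 1: the Fourier transform of $B_N$.} Set
\[
\widetilde{1_{I^c}}(m)=\sum_{x\in I^c}e^{-\frac{2\pi i mx}{N}}=N\delta_{m,0}-a_m .
\]
Since $B_N$ is a sum of two product kernels, \eqref{eq:fourier-transform} gives
\[
\widehat{B_N}(m,n)=\frac1N\bigl(a_m(N\delta_{n,0}-a_n)+(N\delta_{m,0}-a_m)a_n\bigr).
\]
I will read off three cases from this formula, using $a_0=\frac N2$.
\begin{itemize}
\item At $(0,0)$ the value is $\frac1N\cdot 2\cdot\frac N2\cdot\frac N2=\frac N2$.
\item If exactly one of $m,n$ is zero, say $m=0\neq n$, the value is $\frac1N\bigl(-\frac N2 a_n+\frac N2 a_n\bigr)=0$.
\item If $m,n\neq0$, the value is $-\frac2N a_ma_n$.
\end{itemize}
Summing absolute values gives
\[
\|\widehat{B_N}\|_1=\frac N2+\frac2N\Bigl(\sum_{m\ne0}|a_m|\Bigr)^2=\frac N2+\frac2N S_N^2 .
\]
By Parseval, $\|\widehat{B_N}\|_2=\|B_N\|_2=\sqrt{2(N/2)^2}=\frac N{\sqrt2}$. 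Dividing yields \eqref{eq:interval-exact} directly. This also explains the remark before the proposition: the transform is a rank-one tensor $a\otimes a$ off the axes, so it is spread over $\mathbb Z_N^2$ rather than concentrated on a few lines.

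\emph{Step 2: closed form for $a_m$.} For $m\neq0$, the geometric sum gives
\[
a_m=\frac{1-e^{-\pi i m}}{1-e^{-2\pi i m/N}} .
\]
This vanishes for even $m$. For odd $m$ it equals $2/(1-e^{-2\pi i m/N})$, so $|a_m|=\csc(\pi m/N)$. Hence
\[
S_N=\sum_{\substack{1\le m\le N-1\\ m\text{ odd}}}\csc\Bigl(\frac{\pi m}{N}\Bigr)=2\sum_{\substack{1\le m< N/2\\ m\text{ odd}}}\csc\Bigl(\frac{\pi m}{N}\Bigr)+O(1),
\]
where the second equality uses the symmetry $m\mapsto N-m$. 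The $O(1)$ term absorbs the middle term at $m=N/2$ when $N/2$ is odd.

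\emph{Step 3: asymptotics, which I expect to be the only delicate point.} The function $\csc t-\frac1t$ is bounded on $(0,\pi/2]$. Therefore each term satisfies $\csc(\pi m/N)=\frac{N}{\pi m}+O(1)$, and the accumulated error over at most $N/4$ odd values of $m$ is $O(N)$. It then remains to use
\[
\sum_{\substack{m<N/2\\ m\text{ odd}}}\frac1m=\frac12\log N+O(1),
\]
which gives $S_N=\frac{2N}{\pi}\cdot\frac12\log N+O(N)$, that is, \eqref{eq:interval-asymptotic}. Squaring gives $S_N^2=\frac{N^2}{\pi^2}(\log N)^2+O(N^2\log N)$. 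Inserting this into \eqref{eq:interval-exact} produces $\frac{2\sqrt2}{\pi^2}(\log N)^2+O(\log N)$; the constant $\frac1{\sqrt2}$ is absorbed into the error term. This proves \eqref{eq:interval-fr-asymptotic}. The only care needed is to keep the uniform $O(1)$ per-term error so that the total error stays $O(N)$ rather than $O(N\log N)$.
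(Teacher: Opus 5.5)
Your proposal is correct and follows the paper's proof essentially step for step: the same factorization with $b_m=N\delta_{m,0}-a_m$, the same three-case evaluation of $\widehat{B_N}$, the closed form $|a_m|=\csc(\pi m/N)$ for odd $m$, the symmetry reduction, and the comparison with the harmonic sum over odd integers. The only cosmetic difference is that you bound $\csc t-\frac1t$ uniformly by $O(1)$, while the paper uses the sharper per-term error $O(m/N)$; both give the same $O(N)$ total error.
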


\begin{proof}
Let
\[
b_m
=
\sum_{x\in I^c}e^{-\frac{2\pi imx}{N}}
=
N\delta_{m,0}-a_m.
\]
Factorization gives
\[
\widehat B_N(m,n)
=
\frac{1}{N}(a_mb_n+b_ma_n).
\]
At $(m,n)=(0,0)$ this equals $\frac{N}{2}$. If exactly one of $m,n$ is zero, it vanishes. If $m,n\neq0$, then $b_m=-a_m$ and $b_n=-a_n$, so
\[
\widehat B_N(m,n)
=
-\frac{2}{N}a_ma_n.
\]
Consequently,
\[
\|\widehat B_N\|_1
=
\frac{N}{2}
+
\frac{2}{N}
(\sum_{m\neq0}|a_m|)^2
=
\frac{N}{2}
+
\frac{2}{N}S_N^2.
\]
The graph has $\frac{N^2}{2}$ ordered edges, so
\[
\|\widehat B_N\|_2
=
\|B_N\|_2
=
\frac{N}{\sqrt2}.
\]
This proves \eqref{eq:interval-exact}.

The geometric-series identity gives, for $m\neq0$,
\[
|a_m|
=
\begin{cases}
0,&m\text{ is even},\\
\csc(\frac{\pi m}{N}),&m\text{ is odd}.
\end{cases}
\]
For $1\leq m\leq\frac{N}{2}$,
\[
\csc\left(\frac{\pi m}{N}\right)
=
\frac{N}{\pi m}+O\left(\frac{m}{N}\right).
\]
Symmetry about $\frac{N}{2}$ gives
\[
S_N
=
2
\sum_{\substack{1\leq m<\frac{N}{2}\\m\text{ is odd}}}
\csc\left(\frac{\pi m}{N}\right)
+O(1).
\]
Using the preceding expansion and the harmonic sum over odd integers gives
\[
S_N
=
\frac{N}{\pi}\log N+O(N).
\]
Substitution into \eqref{eq:interval-exact} proves \eqref{eq:interval-fr-asymptotic}.
\end{proof}

\begin{corollary}\label{cor:labeling-instability-bipartite}
For the balanced complete bipartite graph,
\[
\liminf_{\substack{N\longrightarrow\infty\\2\mid N}}
\frac{
\operatorname{FR}_{\max}
(K_{\frac{N}{2},\frac{N}{2}})
}{
(\log N)^2
\operatorname{FR}_{\min}
(K_{\frac{N}{2},\frac{N}{2}})
}
\geq
\frac{2}{\pi^2}.
\]
In particular, the labeling-instability quotient is unbounded.
\end{corollary}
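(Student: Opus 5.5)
The plan is to compare one explicit labeling with the exact minimum. Since $\operatorname{FR}_{\max}$ is a maximum over all labelings, it dominates the value at the interval labeling of Proposition \ref{prop:interval-bipartite}:
\[
\operatorname{FR}_{\max}(K_{\frac{N}{2},\frac{N}{2}})\geq\operatorname{FR}(B_N).
\]
For the denominator, Corollary \ref{cor:complete-turan}, specifically \eqref{eq:bipartite-parity}, gives $\operatorname{FR}_{\min}(K_{\frac{N}{2},\frac{N}{2}})=\sqrt2$ for every even $N$. The quotient in the statement is therefore bounded below by
\[
\frac{\operatorname{FR}(B_N)}{\sqrt2\,(\log N)^2}.
\]

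Next I would insert the asymptotic \eqref{eq:interval-fr-asymptotic}. This gives
\[
\frac{\operatorname{FR}(B_N)}{\sqrt2(\log N)^2}=\frac{2\sqrt2/\pi^2}{\sqrt2}+O\left(\frac{1}{\log N}\right)=\frac{2}{\pi^2}+O\left(\frac{1}{\log N}\right).
\]
Taking the $\liminf$ along even $N$ yields the stated lower bound $\frac{2}{\pi^2}$. The final claim then follows at once. Since $(\log N)^2\to\infty$ and the quotient with the factor $(\log N)^2$ stays bounded below by a positive constant, $\operatorname{FR}_{\max}/\operatorname{FR}_{\min}\to\infty$.

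There is essentially no obstacle, since all the analytic work was done in Proposition \ref{prop:interval-bipartite}. The only point to check is that the $O(\log N)$ error there is uniform over even $N$, so that it is negligible after division by $(\log N)^2$. That uniformity is what the $O$-notation in that proposition asserts.
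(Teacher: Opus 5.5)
Your proposal is correct and follows the paper's own proof: you bound $\operatorname{FR}_{\max}$ below by the interval labeling of Proposition \ref{prop:interval-bipartite}, use the parity labeling's value $\operatorname{FR}_{\min}=\sqrt2$, and divide the asymptotic \eqref{eq:interval-fr-asymptotic} by $\sqrt2(\log N)^2$. You also spell out the arithmetic giving $\frac{2}{\pi^2}+O\left(\frac{1}{\log N}\right)$ and the passage to unboundedness, which the paper leaves implicit.
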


\begin{proof}
The interval labeling gives a lower bound for $\operatorname{FR}_{\max}$, while the parity labeling realizes $\operatorname{FR}_{\min}=\sqrt2$.
\end{proof}

This example has a useful recovery interpretation. The parity labeling has bounded Fourier ratio. The interval labeling has Fourier ratio of order $(\log N)^2$, so the structural factor $r^2$ in the sufficient sample bound is of order $(\log N)^4$. Both labelings remain substantially more compressible than the random-phase scale of order $N$, but they are not equivalent from the sampling point of view.

\section{Further structured kernels and extensions}\label{sec:extensions}

The preceding examples are based on translation invariance and congruence classes. Similar estimates hold whenever a kernel is generated by a small number of affine relations.

\begin{proposition}\label{prop:affine-kernels}
Let $a_j\in\mathbb Z_N^\times$, $b_j\in\mathbb Z_N$, and $c_j\in\mathbb C$ for $1\leq j\leq K$. Suppose
\[
F(x,y)
=
\sum_{j=1}^Kc_j1_{\{y=a_jx+b_j\}}
\]
is nonzero. Then
\begin{equation}\label{eq:affine-upper}
\operatorname{FR}(F)
\leq
\sqrt{KN}.
\end{equation}
\end{proposition}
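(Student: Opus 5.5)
The plan is to show that $\widehat F$ is supported on at most $KN$ frequencies and then invoke the support bound \eqref{eq:support-upper-bound}. Since $F$ is assumed nonzero, $\operatorname{FR}(F)$ is defined, and \eqref{eq:support-upper-bound} with $M=KN$ gives \eqref{eq:affine-upper} immediately. So everything reduces to locating the support of the Fourier transform of a single affine line indicator.

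First fix $j$ and set $L_j(x,y)=1_{\{y=a_jx+b_j\}}$. I will compute $\widehat{L_j}$ directly from \eqref{eq:fourier-transform}. Summing over $y$ collapses the double sum to one sum over $x$:
\[
\widehat{L_j}(m,n)
=
\frac{1}{N}\sum_{x\in\mathbb Z_N}e^{-\frac{2\pi i(mx+n(a_jx+b_j))}{N}}
=
e^{-\frac{2\pi i nb_j}{N}}\,\delta_{m+a_jn,0}.
\]
Thus $\widehat{L_j}$ is supported on the line $\ell_j=\{(m,n):m=-a_jn\}$. This line has exactly $N$ points, one for each $n\in\mathbb Z_N$. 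On $\ell_j$ the coefficients are unimodular.

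Second, by linearity, $\widehat F=\sum_j c_j\widehat{L_j}$. Hence $\widehat F$ is supported in $\bigcup_{j=1}^K\ell_j$, which has at most $KN$ points. Cancellation among overlapping lines can only shrink the support, which is harmless for an upper bound. Applying \eqref{eq:support-upper-bound} with $M=KN$ finishes the argument.

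There is no real obstacle here; the only point needing care is the exponential-sum computation. In particular, invertibility of $a_j$ is not actually needed for the support count, since the frequency line $m=-a_jn$ always has $N$ points. Invertibility only guarantees that each spatial relation $y=a_jx+b_j$ is the graph of a permutation.
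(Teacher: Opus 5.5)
Your proposal is correct and follows the paper's own proof exactly: compute $\widehat{1_{\{y=a_jx+b_j\}}}(m,n)=e^{-\frac{2\pi inb_j}{N}}\delta_{m+a_jn,0}$, note that $\widehat F$ is supported on a union of at most $K$ lines of $N$ points each, and apply the support bound \eqref{eq:support-upper-bound}. Your side remark is also right: invertibility of $a_j$ plays no role in the count.
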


\begin{proof}
For one affine relation,
\[
\widehat{1_{\{y=ax+b\}}}(m,n)
=
e^{-\frac{2\pi inb}{N}}\delta_{m+an,0}.
\]
Hence, $\widehat F$ is supported on the union of at most $K$ lines, a set of cardinality at most $KN$. The support estimate \eqref{eq:support-upper-bound} proves the result.
\end{proof}

If $F$ is intended to be the adjacency indicator of a simple graph, the affine relations must satisfy additional conditions. Their supports should not overlap in a way that creates weights larger than one, the union should be symmetric under interchange of $x$ and $y$, and loops should be excluded. Without these assumptions, Proposition \ref{prop:affine-kernels} is a statement about a weighted or directed kernel rather than a simple graph.

\subsection{Fourier algebra structure of low-ratio graph kernels}\label{sec:fourier-algebra}

The preceding affine examples give explicit constructions. A converse structural statement follows from the quantitative Cohen idempotent theorem. We first record the normalization. For $F:\mathbb Z_N^2\to\mathbb C$, define the Haar-normalized Fourier transform by
\[
\widetilde F(m,n)
=
\frac{1}{N^2}
\sum_{x,y\in\mathbb Z_N}
F(x,y)e^{-\frac{2\pi i(mx+ny)}{N}}.
\]
Thus,
\[
\widetilde F(m,n)
=
\frac{1}{N}\widehat F(m,n).
\]
The Fourier algebra norm is
\[
\|F\|_{A(\mathbb Z_N^2)}
=
\sum_{m,n\in\mathbb Z_N}|\widetilde F(m,n)|
=
\frac{1}{N}\|\widehat F\|_1.
\]

Green and Sanders proved a quantitative finite-group form of the Cohen idempotent theorem, and Sanders later obtained the following sharper dependence \cite{GreenSanders,SandersCohen}.

\begin{theorem}\label{thm:quantitative-cohen}
There is a function $\Phi:[1,\infty)\to[1,\infty)$ with
\[
\Phi(M)
=
\exp(M^{4+o(1)})
\]
as $M\longrightarrow\infty$ with the following property. Let $H$ be a finite abelian group, and let $F:H\to\mathbb Z$ satisfy
\[
\|F\|_{A(H)}
\leq
M.
\]
Then there are cosets $W_1,\ldots,W_L$ of subgroups of $H$ and signs $\epsilon_j\in\{-1,1\}$ such that
\begin{equation}\label{eq:cohen-decomposition}
F
=
\sum_{j=1}^L\epsilon_j1_{W_j}
\end{equation}
and
\[
L
\leq
\Phi(M).
\]
\end{theorem}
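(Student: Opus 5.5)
Since Theorem \ref{thm:quantitative-cohen} is an external result, within this paper I would simply cite \cite{GreenSanders,SandersCohen}. If I had to reconstruct the argument, I would follow the Green--Sanders strategy: induction on the algebra norm, with a \emph{norm-decrement} step. The key claim to prove is the following. If $F:H\to\mathbb Z$ satisfies $\|F\|_{A(H)}\leq M$ and $F$ is not already a short signed sum of coset indicators, then $F$ can be split as $F=F_1+F_2$ with $F_1,F_2$ integer valued. Each piece must satisfy $\|F_i\|_{A(H)}\leq M-\tfrac12$, and each must be supported on boundedly many cosets of a subgroup, in a controlled way.

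Iterating this step $O(M)$ times, with the number of pieces multiplying at each stage, gives a decomposition into coset indicators. Two base facts close the iteration. First, a nonzero integer-valued function has algebra norm at least $1$, since $\|F\|_{A(H)}\geq\|F\|_\infty\geq1$. Second, $\pm1_W$ has algebra norm exactly $1$. The count $L$ is controlled by the product of the branching factors, and this product yields $\Phi(M)$.

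The norm-decrement step would be carried out in three stages.
\begin{enumerate}
\item \emph{Almost periodicity.} Use a Chang-type or Croot--Sisask almost-periodicity lemma applied to the Fourier mass of $F$. Since $\sum|\widetilde F|\leq M$, $F$ is uniformly approximated by a sum over a set of frequencies of bounded dimension. Hence there is a Bohr set $B$ of bounded rank and radius (depending only on $M$) such that $\|F-F*\mu_B\|_\infty<\tfrac13$, where $\mu_B$ is the normalized indicator of a regular sub-Bohr set.
\item \emph{Local constancy.} Since $F$ is integer valued, the approximation rounds exactly. So $F$ is constant on most translates of $B$, and the Fourier mass of $F$ splits between frequencies near the annihilator of $B$ and the rest.
\item \emph{Splitting and induction.} This splitting produces the pieces $F_i$. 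An $L^2$ argument over Bohr sets shows each piece loses a definite amount, at least $\tfrac12$, of algebra norm, and the induction hypothesis is then applied inside the coset structure.
\end{enumerate}

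I expect the main obstacle to be the passage from Bohr sets to genuine subgroup cosets, with exact equality rather than approximation. In a general finite abelian group, Bohr sets are not subgroups, and the induction must be run relative to Bohr sets with careful regularity bookkeeping. Only at the end does one argue that an integer-valued function of bounded algebra norm which is locally constant on a Bohr set is in fact constant on cosets of the subgroup it generates. A further requirement is that the rank of the Bohr sets stays bounded polynomially in $M$, so that the final bound is $\exp(M^{4+o(1)})$ rather than tower type. This is where Sanders' refinement, using Croot--Sisask and Bogolyubov--Ruzsa-type bounds, replaces Chang's lemma. I would treat that quantitative bookkeeping as the technical core and defer it to \cite{SandersCohen}.
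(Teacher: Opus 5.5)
Citing Green--Sanders and Sanders is exactly what the paper does. The theorem is imported as a black box and no proof is given, so on that level your proposal agrees with the paper. The reconstruction you sketch, however, fails at its first stage, so it should not be offered as an outline of their argument.

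Stage 1 asserts that $\|F\|_{A(H)}\leq M$ yields a Bohr set $B$, of rank $d$ and radius $\rho$ depending only on $M$, with $\|F-F*\mu_B\|_\infty<\frac13$. This is false already for $M=1$.

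Take $H=\mathbb Z_N$ and $F=1_{\{0\}}$. Then $\widetilde F\equiv\frac1N$, so $\|F\|_{A(\mathbb Z_N)}=1$. On the other hand, $F*\mu_B(0)=\frac{1}{|B|}$ and $|B|\geq c(d,\rho)N$ for some $c(d,\rho)>0$. Hence $|F(0)-F*\mu_B(0)|\to1$ as $N\to\infty$.

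The sentence you give as justification fails on the same example. The Fourier mass of $1_{\{0\}}$ is spread evenly: a partial Fourier sum over a frequency set $S$ equals $\frac{|S|}{N}$ at the origin.

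This problem is intrinsic, not technical. The theorem's conclusion involves cosets of arbitrarily small subgroups, each of algebra norm one. So bounded algebra norm gives no uniform almost periodicity of $F$ at any scale independent of $|H|$. Your Stages 2 and 3, which round $F$ itself and split its Fourier mass according to $B$, therefore have nothing to start from.

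Physical localization alone cannot supply the norm decrement either. Suppose $\widetilde F\geq0$, for instance $F=1_{\{0\}}+1_W$ with $W$ a subgroup and $|W|\geq3$. Then $\|F\|_{A(H)}=F(0)\leq\|F1_V\|_{A(H)}$ for every set $V\ni0$, so restricting to the piece containing $0$ never lowers the norm. In that example what does work is rounding an average: $F*\mu_W=(1+|W|^{-1})1_W$ rounds to $1_W$, and subtracting it leaves $1_{\{0\}}$.

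The missing ingredient in your sketch is therefore the structural norm-decrement step itself. It is not merely the rank bookkeeping that you defer to Sanders.
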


\begin{corollary}[Coset structure of low-ratio Boolean kernels]\label{cor:coset-structure}
Let $F:\mathbb Z_N^2\to\{0,1\}$ be nonzero, and let
\[
\alpha
=
\frac{1}{N^2}
\sum_{x,y\in\mathbb Z_N}F(x,y).
\]
If
\[
\operatorname{FR}(F)
\leq
r,
\]
then $F$ has a representation of the form \eqref{eq:cohen-decomposition}, where the $W_j$ are cosets of subgroups of $\mathbb Z_N^2$ and
\begin{equation}\label{eq:coset-length}
L
\leq
\Phi(\sqrt\alpha r).
\end{equation}
Consequently, if $\operatorname{FR}_{\min}(G)\leq r$, then some labeling of the adjacency kernel of $G$ has such a decomposition with
\[
\alpha
=
\frac{2s}{N^2}.
\]
\end{corollary}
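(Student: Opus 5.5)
The plan is to reduce the statement to Theorem \ref{thm:quantitative-cohen} with $H=\mathbb Z_N^2$. The only real work is computing the Fourier algebra norm of $F$ in terms of $\operatorname{FR}(F)$ and $\alpha$. Since $F$ is $\{0,1\}$-valued, it is in particular $\mathbb Z$-valued, so the integrality hypothesis of Theorem \ref{thm:quantitative-cohen} holds automatically.

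First compute $\|\widehat F\|_2$. Since $F^2=F$, Parseval gives
\[
\|\widehat F\|_2^2=\|F\|_2^2=\sum_{x,y}F(x,y)=\alpha N^2,
\]
so $\|\widehat F\|_2=\sqrt\alpha\,N$. Combining this with the normalization recorded before Theorem \ref{thm:quantitative-cohen}, namely $\|F\|_{A(\mathbb Z_N^2)}=\frac1N\|\widehat F\|_1$, gives
\[
\|F\|_{A(\mathbb Z_N^2)}
=\frac{1}{N}\operatorname{FR}(F)\,\|\widehat F\|_2
=\sqrt\alpha\,\operatorname{FR}(F)
\leq \sqrt\alpha\, r .
\]
This is the identity announced in the introduction. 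Apply Theorem \ref{thm:quantitative-cohen} with $M=\sqrt\alpha\, r$. This yields cosets $W_1,\dots,W_L$ of subgroups of $\mathbb Z_N^2$ and signs $\epsilon_j$ with $F=\sum_j\epsilon_j 1_{W_j}$ and $L\le\Phi(\sqrt\alpha r)$.

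One technical point needs checking: Theorem \ref{thm:quantitative-cohen} requires $M\ge1$. This holds automatically for any nonzero integer-valued $F$, because $\|F\|_{A}\ge\|F\|_\infty\ge1$. Alternatively, one can simply apply the theorem with $M=\max\{1,\sqrt\alpha r\}$. In either case, since the actual norm is at least $1$, the bound $\sqrt\alpha r\ge1$ holds whenever the hypothesis holds, so the choice $M=\sqrt\alpha r$ is legitimate. It remains to check that the result is consistent with monotonicity: if $\Phi$ is not assumed increasing, one applies the theorem with $M$ equal to $\sqrt\alpha r$ itself, which is exactly what is done above.

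For the graph consequence, choose a labeling $\sigma$ attaining $\operatorname{FR}_{\min}(G)\le r$. The minimum is attained because there are finitely many labelings. Apply the above to $F=A_\sigma$, whose density is $\alpha=\frac{2s}{N^2}$ by \eqref{eq:adjacency-frobenius}. No step is a genuine obstacle. The only point requiring care is the normalization bookkeeping between $\widehat F$ and $\widetilde F$, which produces the factor $\sqrt\alpha$.
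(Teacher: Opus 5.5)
Your proposal is correct and matches the paper's proof essentially step for step. You compute $\|F\|_2=N\sqrt\alpha$ from the Boolean property, rewrite $\|F\|_{A(\mathbb Z_N^2)}=\frac1N\|\widehat F\|_1=\sqrt\alpha\operatorname{FR}(F)\le\sqrt\alpha r$, use $1\le\|F\|_\infty\le\|F\|_{A(\mathbb Z_N^2)}$ to guarantee $M\ge1$, apply Theorem \ref{thm:quantitative-cohen}, and then pass to a minimizing labeling with $\alpha=2s/N^2$; the paper does exactly this.
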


\begin{proof}
Since $F$ is Boolean,
\[
\|F\|_2
=
N\sqrt\alpha.
\]
Since $F$ is a nonzero integer-valued function, Fourier inversion also gives
\[
1
\leq
\|F\|_\infty
\leq
\|F\|_{A(\mathbb Z_N^2)}.
\]
Therefore,
\begin{align*}
\|F\|_{A(\mathbb Z_N^2)}
&=
\frac{1}{N}\|\widehat F\|_1\\
&=
\frac{1}{N}
\operatorname{FR}(F)\|F\|_2\\
&=
\sqrt\alpha\operatorname{FR}(F)\\
&\leq
\sqrt\alpha r.
\end{align*}
Theorem \ref{thm:quantitative-cohen} gives the decomposition and \eqref{eq:coset-length}. The graph statement follows by choosing a labeling that realizes the minimum Fourier ratio.
\end{proof}

Corollary \ref{cor:coset-structure} gives a dimension-free inverse theorem. A Boolean kernel with bounded Fourier ratio is an exact signed combination of a bounded number of additive pieces, independently of $N$. If $N$ is prime, then $\mathbb Z_N^2$ is a two-dimensional vector space over the field with $N$ elements. Its subgroup cosets are points, affine lines, and the whole plane. For composite $N$, the cosets include more general congruence structures.

The conclusion is exact, but it is not yet a canonical graph decomposition. The signs allow cancellation, and an individual coset need not be symmetric under interchange of the two coordinates or avoid the diagonal. The Boolean, symmetric, and loop-free properties emerge only after the full signed sum is formed. A graph-specific inverse theory should determine how much additional structure these constraints force.

The nuclear-norm argument also extends beyond simple graphs.

\begin{proposition}[A matrix lower bound]\label{prop:matrix-lower-bound}
Let $F$ be a nonzero complex $N$ by $N$ matrix, regarded as a function on $\mathbb Z_N^2$. Then
\begin{equation}\label{eq:matrix-nuclear-bound}
\operatorname{FR}(F)
\geq
\frac{\|F\|_*}{\|F\|_2}.
\end{equation}
\end{proposition}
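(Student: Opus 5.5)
The plan is to repeat the argument of Theorem \ref{thm:energy-lower-bound} without using symmetry or the graph structure. First, with the unitary Fourier matrix
\[
U_{m,x}=\frac{1}{\sqrt N}e^{-\frac{2\pi i mx}{N}},
\]
I would check that the normalization in \eqref{eq:fourier-transform} gives
\[
\widehat F = UFU
\]
for an arbitrary complex matrix $F$. This is immediate from the definition, since $U$ is symmetric and $\frac{1}{\sqrt N}\cdot\frac{1}{\sqrt N}=\frac1N$. Symmetry or realness of $F$ plays no role here.

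Second, left and right multiplication by unitary matrices preserves singular values. Hence $\|\widehat F\|_*=\|F\|_*$. Lemma \ref{lem:nuclear-entrywise}, applied to the matrix $\widehat F$, then gives
\[
\|F\|_*=\|\widehat F\|_*\le\|\widehat F\|_{1,\operatorname{ent}}=\|\widehat F\|_1.
\]

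Third, Parseval's identity gives $\|\widehat F\|_2=\|F\|_2$, which is nonzero because $F\neq0$. Dividing the previous inequality by this quantity yields \eqref{eq:matrix-nuclear-bound}.

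No step is a genuine obstacle. The only point needing care is the matrix identity $\widehat F=UFU$ with the paper's $\frac1N$ normalization, which one checks entrywise. It is also worth noting that, unlike in Theorem \ref{thm:energy-lower-bound}, we do not identify $\|F\|_*$ with a sum of absolute eigenvalues, since $F$ need not be normal. The nuclear norm itself is the correct quantity for a general matrix.
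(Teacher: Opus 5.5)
Your proposal is correct and follows the paper's proof essentially verbatim: write $\widehat F=UFU$, use unitary invariance of singular values to get $\|\widehat F\|_*=\|F\|_*$, apply Lemma \ref{lem:nuclear-entrywise}, and divide by $\|\widehat F\|_2=\|F\|_2$ via Parseval. Your remark that normality of $F$ is never used, so that the nuclear norm rather than an eigenvalue sum is the right quantity, is accurate.
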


\begin{proof}
The two-dimensional Fourier transform has the matrix form $\widehat F=UFU$, where $U$ is unitary. Thus, $\|\widehat F\|_*=\|F\|_*$. Lemma \ref{lem:nuclear-entrywise} and Parseval's identity give
\[
\operatorname{FR}(F)
=
\frac{\|\widehat F\|_{1,\operatorname{ent}}}{\|\widehat F\|_2}
\geq
\frac{\|F\|_*}{\|F\|_2}.
\]
\end{proof}

For a Hermitian weighted graph, the numerator in \eqref{eq:matrix-nuclear-bound} is the sum of the absolute values of the weighted adjacency eigenvalues. For a directed graph, it is the sum of the singular values. Signed graphs are included without modification. The recovery theorem is homogeneous, so multiplying all weights by a common scalar does not change the Fourier ratio.

There is also a simple fixed-labeling perturbation estimate.

\begin{proposition}[Stability under matrix perturbation]\label{prop:fr-perturbation}
Let $F$ and $G$ be nonzero kernels on $\mathbb Z_N^2$. Suppose
\[
\delta
=
\frac{\|G-F\|_2}{\|F\|_2}
<1.
\]
Then
\begin{equation}\label{eq:fr-perturbation}
\operatorname{FR}(G)
\leq
\frac{\operatorname{FR}(F)+N\delta}{1-\delta}.
\end{equation}
\end{proposition}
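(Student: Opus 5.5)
We bound the numerator and denominator of $\operatorname{FR}(G)$ separately, using Parseval's identity together with the elementary comparison between $\ell^1$ and $\ell^2$ norms on $\mathbb Z_N^2$.

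For the numerator, the triangle inequality gives
\[
\|\widehat G\|_1\leq\|\widehat F\|_1+\|\widehat G-\widehat F\|_1 .
\]
Since $\widehat G-\widehat F$ has $N^2$ coordinates, Cauchy--Schwarz and Parseval give
\[
\|\widehat G-\widehat F\|_1\leq N\|\widehat G-\widehat F\|_2=N\|G-F\|_2=N\delta\|F\|_2 .
\]
Writing $\|\widehat F\|_1=\operatorname{FR}(F)\|F\|_2$, we obtain
\[
\|\widehat G\|_1\leq\bigl(\operatorname{FR}(F)+N\delta\bigr)\|F\|_2 .
\]

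For the denominator, the reverse triangle inequality and Parseval give
\[
\|\widehat G\|_2=\|G\|_2\geq\|F\|_2-\|G-F\|_2=(1-\delta)\|F\|_2 .
\]
This is strictly positive because $\delta<1$, so the quotient is well defined.

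Dividing the numerator bound by the denominator bound cancels the common factor $\|F\|_2$ and yields \eqref{eq:fr-perturbation}. None of the steps is a real obstacle. The only point needing care is the normalization: the bound $\|h\|_1\leq N\|h\|_2$ holds on $\mathbb Z_N^2$ because that group has $N^2$ points, and Parseval is exact for the unitary normalization \eqref{eq:fourier-transform}. With these in place, the factor $N$ in front of $\delta$ comes out as stated.
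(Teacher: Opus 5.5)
Your proof is correct and follows the paper's argument exactly. You bound $\|\widehat G\|_1$ above by $\operatorname{FR}(F)\|F\|_2+N\|G-F\|_2$ using Cauchy--Schwarz and Parseval, bound $\|G\|_2$ below by $(1-\delta)\|F\|_2$ using the reverse triangle inequality, and divide; your observation that $\delta<1$ keeps the denominator positive is a point the paper leaves implicit.
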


\begin{proof}
By Cauchy--Schwarz and Parseval's identity,
\[
\|\widehat{G-F}\|_1
\leq
N\|G-F\|_2.
\]
Therefore,
\[
\|\widehat G\|_1
\leq
\operatorname{FR}(F)\|F\|_2
+
N\|G-F\|_2.
\]
Also,
\[
\|G\|_2
\geq
\|F\|_2-\|G-F\|_2.
\]
Dividing the two estimates proves \eqref{eq:fr-perturbation}.
\end{proof}

If two simple graphs in the same labeling differ by $t$ undirected edges and the first graph has $s$ edges, then
\[
\delta
=
\sqrt{\frac{t}{s}}.
\]
The resulting bound is crude because of the factor $N$, but it gives a rigorous form of edit stability when the perturbation is sufficiently small. The more delicate question is whether two nearby abstract graphs admit one labeling that is simultaneously good for both.

The Fourier ratio also has an exact entropy interpretation. Let
\[
p_{m,n}
=
\frac{|\widehat F(m,n)|^2}{\|\widehat F\|_2^2}.
\]
Then $p$ is a probability distribution on $\mathbb Z_N^2$. Its R\'enyi entropy of order $\frac{1}{2}$ is
\[
H_{\frac{1}{2}}(p)
=
2\log(\sum_{m,n}\sqrt{p_{m,n}})
=
2\log\operatorname{FR}(F).
\]
Thus, $\operatorname{FR}(F)^2$ may be viewed as an entropic effective dimension. This is consistent with the role of the square of the Fourier ratio in recovery and metric entropy estimates \cite{IosevichHovhannisyanKeyshamsVagharshakyan}.

\section{Spectral-projector kernels and harmonic complexity}\label{sec:projectors}

Let $L=D-A$ be the combinatorial Laplacian of $G$. For an eigenvalue $\lambda$, let $\Pi_\lambda$ be the orthogonal projector onto the corresponding eigenspace, and let
\[
m(\lambda)
=
\operatorname{rank}(\Pi_\lambda)
\]
be its multiplicity. A labeling $\sigma:\mathbb Z_N\to V(G)$ identifies $\Pi_\lambda$ with a kernel $\Pi_{\lambda,\sigma}$ on $\mathbb Z_N^2$. Define
\begin{equation}\label{eq:projector-fr-min}
\operatorname{FR}_{\min}(\Pi_\lambda)
=
\min_\sigma\operatorname{FR}(\Pi_{\lambda,\sigma}).
\end{equation}

The projector kernel is intrinsic before the labeling is chosen. It is also independent of the choice of an orthonormal basis inside the eigenspace. The Fourier ratio, however, depends on the coordinates placed on the vertices, just as it does for the adjacency kernel.

\begin{remark}\label{rem:projector-oracle}
All recovery statements concerning projectors will use samples of the projector kernel $\Pi_{\lambda,\sigma}(x,y)$ itself. Samples of the adjacency matrix do not directly provide these values. Deriving projector estimates from incomplete adjacency data is a distinct perturbation and inverse problem that is not addressed by the Fourier-ratio theorem alone.
\end{remark}

\subsection{The multiplicity lower bound}

\begin{theorem}\label{thm:projector-lower-bound}
For every graph, every Laplacian eigenvalue $\lambda$, and every vertex labeling $\sigma$,
\begin{equation}\label{eq:projector-lower}
\operatorname{FR}(\Pi_{\lambda,\sigma})
\geq
\sqrt{m(\lambda)}.
\end{equation}
Consequently,
\begin{equation}\label{eq:projector-min-lower}
\operatorname{FR}_{\min}(\Pi_\lambda)
\geq
\sqrt{m(\lambda)}.
\end{equation}
\end{theorem}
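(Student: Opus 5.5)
The plan is to apply Proposition \ref{prop:matrix-lower-bound} to the kernel $F=\Pi_{\lambda,\sigma}$, which gives
\[
\operatorname{FR}(\Pi_{\lambda,\sigma})\geq\frac{\|\Pi_{\lambda,\sigma}\|_*}{\|\Pi_{\lambda,\sigma}\|_2}.
\]
It then suffices to compute both norms on the right exactly. The key point is that both are unitary invariants of the projector, so neither depends on $\sigma$.

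First, a relabeling replaces $\Pi_\lambda$ by $P^T\Pi_\lambda P$ for a permutation matrix $P$. The result is again an orthogonal projector of rank $m(\lambda)$: it is Hermitian (indeed real symmetric, since $L$ is real symmetric) and idempotent. Its eigenvalues are therefore $1$ with multiplicity $m(\lambda)$ and $0$ with multiplicity $N-m(\lambda)$. Because the matrix is Hermitian, its singular values coincide with these eigenvalues, so
\[
\|\Pi_{\lambda,\sigma}\|_*=m(\lambda).
\]
Second, the Frobenius norm is the $\ell^2$ norm of the singular values. Equivalently, $\|\Pi\|_2^2=\operatorname{tr}(\Pi^*\Pi)=\operatorname{tr}(\Pi)=m(\lambda)$. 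Hence
\[
\|\Pi_{\lambda,\sigma}\|_2=\sqrt{m(\lambda)}.
\]

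Substituting these two values gives $\operatorname{FR}(\Pi_{\lambda,\sigma})\geq m(\lambda)/\sqrt{m(\lambda)}=\sqrt{m(\lambda)}$, which is \eqref{eq:projector-lower}. Since the bound holds for every labeling, taking the minimum over $\sigma$ yields \eqref{eq:projector-min-lower}. Nonzeroness of the kernel, which Proposition \ref{prop:matrix-lower-bound} requires, holds because $m(\lambda)\geq1$ for a genuine eigenvalue.

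There is no serious obstacle. The only points to state explicitly are that permutation conjugation preserves the projector property, and that the identity $\widehat F=UFU$ used in Proposition \ref{prop:matrix-lower-bound} applies to arbitrary complex matrices, so the argument does not need the special structure of adjacency kernels.
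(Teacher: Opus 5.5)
Your proposal is correct and matches the paper's argument: the paper also uses that $\Pi_{\lambda,\sigma}$ has $m(\lambda)$ singular values equal to one, that $\widehat F=UFU$ preserves singular values, and Lemma~\ref{lem:nuclear-entrywise}, which together give $\|\widehat{\Pi_{\lambda,\sigma}}\|_1\geq m(\lambda)$, and then divides by $\|\Pi_{\lambda,\sigma}\|_2=\sqrt{m(\lambda)}$. Citing Proposition~\ref{prop:matrix-lower-bound} instead simply packages those same steps into one already-proved inequality.
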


\begin{proof}
The projector has $m(\lambda)$ singular values equal to one and all remaining singular values equal to zero. Thus,
\[
\|\Pi_{\lambda,\sigma}\|_*
=
m(\lambda)
\]
and
\[
\|\Pi_{\lambda,\sigma}\|_2
=
\sqrt{m(\lambda)}.
\]
As in the proof of Theorem \ref{thm:energy-lower-bound}, the two-dimensional Fourier transform preserves singular values. Lemma \ref{lem:nuclear-entrywise} therefore gives
\[
\|\widehat{\Pi_{\lambda,\sigma}}\|_1
\geq
m(\lambda).
\]
Division by the Frobenius norm proves \eqref{eq:projector-lower}. Minimizing over $\sigma$ gives \eqref{eq:projector-min-lower}.
\end{proof}

If the graph has $c$ connected components, the zero eigenspace has multiplicity $c$. Theorem \ref{thm:projector-lower-bound} then gives
\[
\operatorname{FR}_{\min}(\Pi_0)
\geq
\sqrt c.
\]
For a connected graph, the zero projector is the constant kernel $\frac{1}{N}J$ and its Fourier ratio equals one.

\subsection{Circulant graphs}

\begin{theorem}[Exact projector complexity for circulant graphs]\label{thm:circulant-projectors}
Let $G$ be a circulant graph on $\mathbb Z_N$ with its natural labeling. For a Laplacian eigenvalue $\lambda$, let
\[
S_\lambda
=
\{\xi\in\mathbb Z_N:\lambda_\xi=\lambda\}
\]
be its frequency set. Then
\begin{equation}\label{eq:circulant-projector-exact}
\operatorname{FR}(\Pi_\lambda)
=
\sqrt{|S_\lambda|}
=
\sqrt{m(\lambda)}.
\end{equation}
Hence, the natural labeling simultaneously minimizes $\operatorname{FR}(A)$ and $\operatorname{FR}(\Pi_\lambda)$ for every $\lambda$.
\end{theorem}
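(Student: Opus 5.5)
We plan to compute $\Pi_\lambda$ explicitly in the character basis, take its two-dimensional Fourier transform directly, and then invoke Theorem \ref{thm:projector-lower-bound} for optimality. For a circulant graph $\operatorname{Cay}(\mathbb Z_N,S)$ the Laplacian is $L=dI-A$ with $d=|S|$. The characters $e_\xi(x)=N^{-1/2}e^{2\pi i\xi x/N}$ form an orthonormal eigenbasis for $L$, with eigenvalues $\lambda_\xi=d-\sum_{t\in S}e^{2\pi i\xi t/N}$. We take this as the definition of $\lambda_\xi$ appearing in $S_\lambda$; the sign convention is immaterial since $S=-S$.

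The first step is the multiplicity count. The $e_\xi$ span $\mathbb C^N$, so the $\lambda$-eigenspace is exactly the span of $\{e_\xi:\xi\in S_\lambda\}$. This gives $m(\lambda)=|S_\lambda|$, and also
\[
\Pi_\lambda(x,y)=\sum_{\xi\in S_\lambda}e_\xi(x)\overline{e_\xi(y)}=\frac1N\sum_{\xi\in S_\lambda}e^{\frac{2\pi i\xi(x-y)}{N}},
\]
which is independent of the chosen orthonormal basis.

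The second step is the Fourier transform. Substituting into \eqref{eq:fourier-transform}, the sums over $x$ and $y$ factor. The sum over $x$ equals $N\delta_{m,\xi}$ and the sum over $y$ equals $N\delta_{n,-\xi}$. Hence
\[
\widehat{\Pi_\lambda}(m,n)=\sum_{\xi\in S_\lambda}\delta_{m,\xi}\,\delta_{n,-\xi}.
\]
This is the indicator function of the $|S_\lambda|$ anti-diagonal points $(\xi,-\xi)$. It follows that $\|\widehat{\Pi_\lambda}\|_1=|S_\lambda|$ and $\|\widehat{\Pi_\lambda}\|_2=\sqrt{|S_\lambda|}$, so $\operatorname{FR}(\Pi_\lambda)=\sqrt{|S_\lambda|}=\sqrt{m(\lambda)}$. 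This is \eqref{eq:circulant-projector-exact}.

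The final step is optimality. Theorem \ref{thm:projector-lower-bound} shows that every labeling has $\operatorname{FR}\geq\sqrt{m(\lambda)}$, so the natural labeling attains $\operatorname{FR}_{\min}(\Pi_\lambda)$ for each $\lambda$. Theorem \ref{thm:circulant-optimal} already shows that the same labeling attains $\operatorname{FR}_{\min}(G)$, which gives simultaneous minimization. The only delicate point is the bookkeeping that identifies the full eigenspace with the span of the characters indexed by $S_\lambda$. This relies on the characters being a complete eigenbasis, so that no eigenvector outside their span can occur. We also need the index and sign conventions of the transform to place the support on $m+n=0$; any other convention only permutes the support, which leaves both norms unchanged.
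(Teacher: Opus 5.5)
Your proposal is correct and follows the paper's proof essentially step for step: you expand $\Pi_\lambda$ in the character eigenbasis, compute $\widehat{\Pi_\lambda}(m,n)=1_{S_\lambda}(m)\delta_{m+n,0}$, read off the $\ell^1$ and $\ell^2$ norms, and invoke Theorem \ref{thm:projector-lower-bound} and Theorem \ref{thm:circulant-optimal} for simultaneous optimality. You also make explicit why $m(\lambda)=|S_\lambda|$: the characters form a complete eigenbasis. The paper uses this fact without stating it.
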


\begin{proof}
The normalized characters
\[
\frac{1}{\sqrt N}e^{\frac{2\pi i\xi x}{N}}
\]
diagonalize the Laplacian. Therefore,
\[
\Pi_\lambda(x,y)
=
\frac{1}{N}
\sum_{\xi\in S_\lambda}
e^{\frac{2\pi i\xi(x-y)}{N}}.
\]
A direct computation gives
\[
\widehat{\Pi_\lambda}(m,n)
=
1_{S_\lambda}(m)\delta_{m+n,0}.
\]
Thus, $\widehat{\Pi_\lambda}$ has exactly $|S_\lambda|$ nonzero coefficients, each of modulus one. Its $\ell^1$ and $\ell^2$ norms are $|S_\lambda|$ and $\sqrt{|S_\lambda|}$, respectively. This proves \eqref{eq:circulant-projector-exact}. Theorem \ref{thm:projector-lower-bound} proves optimality for each projector, while Theorem \ref{thm:circulant-optimal} proves optimality for the adjacency matrix.
\end{proof}

As with the adjacency calculation, the same proof applies to a Cayley graph on an arbitrary finite abelian group when the Fourier transform on that group is used.

\begin{corollary}\label{cor:projector-examples}
Under the natural cyclic labeling, every Laplacian projector of $C_N$ has Fourier ratio $1$ or $\sqrt2$.

For $K_N$, the projectors corresponding to the eigenvalues $0$ and $N$ have Fourier ratios $1$ and $\sqrt{N-1}$, respectively.

For $K_{\frac{N}{2},\frac{N}{2}}$ with the parity labeling, the projectors corresponding to the eigenvalues $0$, $N$, and $\frac{N}{2}$ have Fourier ratios $1$, $1$, and $\sqrt{N-2}$, respectively.
\end{corollary}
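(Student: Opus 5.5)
The plan is to apply Theorem \ref{thm:circulant-projectors} to each of the three circulant graphs. That theorem gives $\operatorname{FR}(\Pi_\lambda)=\sqrt{|S_\lambda|}$ under the natural labeling. So in every case the only work is to compute the Laplacian eigenvalue $\lambda_\xi$ attached to each character $\xi\in\mathbb Z_N$ and count the frequency sets $S_\lambda$. For a circulant graph $\operatorname{Cay}(\mathbb Z_N,S)$ with degree $|S|$, the Laplacian eigenvalue at frequency $\xi$ is
\[
\lambda_\xi=|S|-\sum_{t\in S}e^{-\frac{2\pi i\xi t}{N}}.
\]
This follows from the adjacency eigenvalues in the proof of Theorem \ref{thm:circulant-optimal}.

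For $C_N$ we have $\lambda_\xi=2-2\cos(\frac{2\pi\xi}{N})$. Since cosine is injective on $[0,\pi]$, we get $\lambda_\xi=\lambda_\eta$ exactly when $\eta=\pm\xi$. Hence each $S_\lambda$ is $\{\xi,-\xi\}$, which has one element when $\xi=-\xi$ (that is, $\xi=0$, or $\xi=\frac N2$ for even $N$) and two elements otherwise. This gives Fourier ratios $1$ or $\sqrt2$.

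For $K_N$ we take $S=\mathbb Z_N\setminus\{0\}$. Then $\lambda_0=0$, and for $\xi\neq0$ the character sum equals $-1$, so $\lambda_\xi=N$. Thus $|S_0|=1$ and $|S_N|=N-1$.

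For $K_{\frac N2,\frac N2}$ with the parity labeling, $S$ is the set of odd residues and $|S|=\frac N2$. We need the sum $\sum_{t\text{ odd}}e^{-2\pi i\xi t/N}$.
\begin{itemize}
\item For $\xi=0$ it equals $\frac N2$, so $\lambda_0=0$.
\item For $\xi=\frac N2$ every term is $-1$, so the sum is $-\frac N2$ and $\lambda=N$.
\item For any other $\xi$, factor out $e^{-2\pi i\xi/N}$. What remains is a sum of $e^{-4\pi i\xi u/N}$ over $u\in\mathbb Z_{N/2}$, which is a full geometric sum of a nontrivial character of $\mathbb Z_{N/2}$ because $2\xi\not\equiv0\pmod N$. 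It therefore vanishes, giving $\lambda=\frac N2$.
\end{itemize}
So $|S_0|=|S_N|=1$ and $|S_{N/2}|=N-2$, and Theorem \ref{thm:circulant-projectors} yields the ratios $1$, $1$, $\sqrt{N-2}$.

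The only step needing care is this last character-sum evaluation. There one must check that the frequencies other than $0$ and $\frac N2$ make the odd-residue sum vanish. The reduction to a complete sum over $\mathbb Z_{N/2}$ handles it.
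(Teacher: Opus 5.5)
Your proposal is correct and takes the paper's route: both arguments reduce to Theorem \ref{thm:circulant-projectors} and count the frequency sets $S_\lambda$, equivalently the Laplacian multiplicities, for $C_N$, $K_N$, and the parity-labeled $K_{\frac N2,\frac N2}$. The only difference is that the paper quotes the three Laplacian spectra, while you derive the multiplicities from explicit character sums, including the complete geometric sum over $\mathbb Z_{N/2}$; this adds detail without changing the argument.
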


\begin{proof}
The nontrivial Laplacian eigenvalues of a cycle have multiplicity two, except for the eigenvalues arising from the frequencies $0$ and, when $N$ is even, $\frac{N}{2}$. The Laplacian spectrum of $K_N$ is $0$ with multiplicity one and $N$ with multiplicity $N-1$. The Laplacian spectrum of $K_{\frac{N}{2},\frac{N}{2}}$ is $0$ with multiplicity one, $N$ with multiplicity one, and $\frac{N}{2}$ with multiplicity $N-2$. The result follows from Theorem \ref{thm:circulant-projectors}.
\end{proof}

\subsection{Edge complexity versus harmonic complexity}

The Fourier ratio framework gives two distinct notions of graph complexity. The adjacency Fourier ratio measures the additive spectral compressibility of the edge relation itself. The projector Fourier ratio measures the additive spectral compressibility of one harmonic sector of the Laplacian.

The complete graph is simple at the level of edges and complicated at the level of its nontrivial harmonic sector. We have
\[
\operatorname{FR}_{\min}(K_N)
=
2\sqrt{1-\frac{1}{N}},
\]
while
\[
\operatorname{FR}_{\min}(\Pi_N)
=
\sqrt{N-1}.
\]
The cycle exhibits the opposite behavior. Its adjacency Fourier ratio is asymptotic to $\frac{2\sqrt2}{\pi}\sqrt N$, while every projector Fourier ratio is at most $\sqrt2$.

The balanced complete bipartite graph provides a third pattern. Its parity-labeled adjacency kernel attains the universal minimum $\sqrt2$, but the projector onto the eigenspace of eigenvalue $\frac{N}{2}$ has Fourier ratio $\sqrt{N-2}$. Thus, even the strongest possible additive compression of the edge relation does not force every harmonic sector to be compressible.

These examples show that adjacency complexity and harmonic complexity should not be identified. They answer different questions. A small adjacency Fourier ratio gives access to a stable approximation of the displayed edge kernel. A small projector Fourier ratio gives access, in a different observation model, to one spectral band. The two notions become simultaneously optimal for circulant graphs because the same character basis diagonalizes the adjacency matrix and all Laplacian projectors.

The multiplicity lower bound suggests that $m(\lambda)$ is an effective harmonic dimension. This interpretation parallels the role of $\operatorname{FR}(F)^2$ as an effective dimension in recovery and metric entropy. A high-multiplicity eigenspace cannot have a projector Fourier ratio smaller than the square root of its dimension, regardless of how the vertices are relabeled.

\section{Strongly regular graphs and simultaneous minimization}\label{sec:srg}

For a strongly regular graph, the nontrivial spectral projectors are affine combinations of the adjacency matrix, the identity matrix, and the all-ones matrix. This creates a direct link between adjacency Fourier complexity and harmonic Fourier complexity.

Let $G$ be a connected strongly regular graph with parameters $(v,d,\lambda,\mu)$. Let $d>r>s$ be the three adjacency eigenvalues, with multiplicities $1,f,g$. The two nonzero Laplacian eigenvalues are $d-r$ and $d-s$. The nontrivial adjacency eigenvalues are the roots of
\[
t^2-(\lambda-\mu)t-(d-\mu)=0.
\]
Thus,
\[
r
=
\frac{\lambda-\mu+\sqrt{(\lambda-\mu)^2+4(d-\mu)}}{2}
\]
and
\[
s
=
\frac{\lambda-\mu-\sqrt{(\lambda-\mu)^2+4(d-\mu)}}{2}.
\]
The multiplicities are determined by $v,d,r,s$ and may be found from the trace and dimension identities; see \cite{BrouwerHaemers}.

Since the graph is regular, the projector onto the constant eigenspace is $\frac{1}{v}J$. The two nontrivial projectors are
\begin{equation}\label{eq:srg-projector-r}
\Pi_{d-r}
=
\frac{1}{r-s}
(
A-sI-\frac{d-s}{v}J
)
\end{equation}
and
\begin{equation}\label{eq:srg-projector-s}
\Pi_{d-s}
=
\frac{1}{s-r}
(
A-rI-\frac{d-r}{v}J
).
\end{equation}

\begin{proposition}\label{prop:srg-link}
Fix a labeling of a connected strongly regular graph with parameters and notation as above. Then
\begin{equation}\label{eq:srg-bound-r}
\operatorname{FR}(\Pi_{d-r})
\leq
\frac{
\sqrt{vd}\operatorname{FR}(A)-d+|s|(v-1)
}{
(r-s)\sqrt f
}
\end{equation}
and
\begin{equation}\label{eq:srg-bound-s}
\operatorname{FR}(\Pi_{d-s})
\leq
\frac{
\sqrt{vd}\operatorname{FR}(A)-d+|r|(v-1)
}{
(r-s)\sqrt g
}.
\end{equation}
\end{proposition}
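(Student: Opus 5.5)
The plan is to take the Fourier transform of the affine formulas \eqref{eq:srg-projector-r} and \eqref{eq:srg-projector-s} term by term. I would then bound the $\ell^1$ norm with the triangle inequality, taking care to separate the frequency $(0,0)$, where an exact cancellation occurs. The Frobenius norm of each projector is known exactly, so only the numerator needs work.

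The first step is to compute the three transforms with $N=v$ and the normalization \eqref{eq:fourier-transform}.
\begin{itemize}
\item For the identity, $\widehat I(m,n)=\frac1v\sum_x e^{-2\pi i(m+n)x/v}=\delta_{m+n,0}$. It is supported on the anti-diagonal, with $v$ entries of modulus one.
\item For the all-ones matrix, $\widehat J(m,n)=v\,\delta_{(m,n),(0,0)}$.
\item For the adjacency matrix, $\widehat A(0,0)=\frac{2|E|}{v}=d$ because the graph is $d$-regular. Also $\|\widehat A\|_1=\operatorname{FR}(A)\|A\|_2=\operatorname{FR}(A)\sqrt{vd}$, using \eqref{eq:adjacency-frobenius} with $2|E|=vd$.
\end{itemize}

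Next, I would evaluate the transform of $B=A-sI-\frac{d-s}{v}J$ at $(0,0)$. It equals $d-s-(d-s)=0$, so the zero frequency contributes nothing. At every other frequency, $\widehat B(m,n)=\widehat A(m,n)-s\,\delta_{m+n,0}$. The triangle inequality then gives
\[
\|\widehat B\|_1\le \sum_{(m,n)\ne(0,0)}|\widehat A(m,n)|+|s|\,(v-1)=\sqrt{vd}\,\operatorname{FR}(A)-d+|s|(v-1).
\]
Here $v-1$ counts the nonzero anti-diagonal frequencies, and the term $-d$ arises because $\widehat A(0,0)=d>0$ is removed from the full $\ell^1$ norm.

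The final step is to divide. We have $\widehat{\Pi_{d-r}}=\widehat B/(r-s)$ with $r-s>0$. By Parseval, $\|\widehat{\Pi_{d-r}}\|_2=\|\Pi_{d-r}\|_2=\sqrt{\operatorname{rank}}=\sqrt f$, since the projector's singular values are $f$ ones. This gives \eqref{eq:srg-bound-r}. The bound \eqref{eq:srg-bound-s} follows identically, with $r$ in place of $s$ in the identity coefficient. The cancellation at $(0,0)$ again holds, since $d-r-(d-r)=0$. The prefactor is $1/|s-r|=1/(r-s)$ and the rank is $g$.

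The only point needing care is the zero-frequency cancellation and the bookkeeping of the $-d$ term. The regularity of the graph is essential there, so that $\widehat A(0,0)$ is exactly $d$. Everything else is the triangle inequality.
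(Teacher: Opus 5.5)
Your proposal is correct and follows the paper's proof step for step. You transform $I$, $J$, and $A$, and use $d$-regularity to get the cancellation $\widehat{\Pi_{d-r}}(0,0)=0$. You then bound the remaining entries by the triangle inequality over the $v-1$ off-origin anti-diagonal frequencies and divide by $\|\Pi_{d-r}\|_2=\sqrt f$; the second bound is the same argument with $r$, $g$, and prefactor $1/|s-r|$.
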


\begin{proof}
For a matrix indexed by $\mathbb Z_v$,
\[
\widehat I(m,n)
=
\delta_{m+n,0}
\]
and
\[
\widehat J(m,n)
=
v\delta_{m,0}\delta_{n,0}.
\]
Since $G$ is $d$-regular,
\[
\widehat A(0,0)=d.
\]
Taking the Fourier transform in \eqref{eq:srg-projector-r} gives
\[
\widehat{\Pi_{d-r}}(0,0)=0.
\]
For $m+n=0$ and $(m,n)\neq(0,0)$,
\[
\widehat{\Pi_{d-r}}(m,n)
=
\frac{\widehat A(m,-m)-s}{r-s},
\]
while for $m+n\neq0$,
\[
\widehat{\Pi_{d-r}}(m,n)
=
\frac{\widehat A(m,n)}{r-s}.
\]
Therefore,
\[
\|\widehat{\Pi_{d-r}}\|_1
\leq
\frac{
\|\widehat A\|_1-d+|s|(v-1)
}{r-s}.
\]
Since $A$ has $vd$ nonzero entries,
\[
\|\widehat A\|_1
=
\sqrt{vd}\operatorname{FR}(A).
\]
Also, $\|\Pi_{d-r}\|_2=\sqrt f$. This proves \eqref{eq:srg-bound-r}. The second estimate follows from \eqref{eq:srg-projector-s} in the same way.
\end{proof}

These estimates are one-sided and use the same labeling for the adjacency matrix and the projector. They do not imply that a labeling minimizing $\operatorname{FR}(A)$ also minimizes either projector. They do show that, within the strongly regular family, a good adjacency labeling supplies an explicit upper bound for the harmonic complexity in that same coordinate system.

Taking a labeling that minimizes the adjacency Fourier ratio gives, for example,
\[
\operatorname{FR}_{\min}(\Pi_{d-r})
\leq
\frac{
\sqrt{vd}\operatorname{FR}_{\min}(G)-d+|s|(v-1)
}{
(r-s)\sqrt f
}.
\]
The analogous estimate holds for $\Pi_{d-s}$.

\subsection{The Petersen graph}

The Petersen graph gives a concrete example in which adjacency and harmonic minimizers are different. We use the Kneser description. The vertices are the two-element subsets of $\{0,1,2,3,4\}$, and two vertices are adjacent when the corresponding subsets are disjoint. We number the vertices in lexicographic order:
\[
\begin{aligned}
0&=\{0,1\},&1&=\{0,2\},&2&=\{0,3\},&3&=\{0,4\},&4&=\{1,2\},\\
5&=\{1,3\},&6&=\{1,4\},&7&=\{2,3\},&8&=\{2,4\},&9&=\{3,4\}.
\end{aligned}
\]
The adjacency eigenvalues are $3$, $1$, and $-2$, with multiplicities $1$, $5$, and $4$. The Laplacian eigenvalues are $0$, $2$, and $5$. The two nontrivial projectors are
\begin{equation}\label{eq:petersen-projectors}
\Pi_2
=
\frac{1}{3}
(A+2I-\frac{1}{2}J)
\end{equation}
and
\[
\Pi_5
=
\frac{1}{3}
(-A+I+\frac{1}{5}J).
\]

For a kernel $M$, let
\[
\mathcal S_M
=
\{\sigma\in S_{10}:\operatorname{FR}(M_\sigma)=\operatorname{FR}_{\min}(M)\}.
\]

\begin{proposition}[Exhaustive Petersen computation]\label{prop:petersen-computation}
For the Petersen graph with the convention above,
\[
\operatorname{FR}_{\min}(A)
=
3.6806729989\ldots,
\]
\[
\operatorname{FR}_{\min}(\Pi_2)
=
3.5694013108\ldots,
\]
and
\[
\operatorname{FR}_{\min}(\Pi_5)
=
3.4907119850\ldots.
\]
The numbers of minimizing labelings are
\[
|\mathcal S_A|=960,
|\mathcal S_{\Pi_2}|=240,
|\mathcal S_{\Pi_5}|=240.
\]
Moreover,
\[
\mathcal S_A\cap\mathcal S_{\Pi_2}
=
\mathcal S_A\cap\mathcal S_{\Pi_5}
=
\varnothing,
\]
while
\[
\mathcal S_{\Pi_2}
=
\mathcal S_{\Pi_5}.
\]
\end{proposition}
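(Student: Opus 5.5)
This is a finite computation, and I will prove it by exhaustive enumeration organized to be verifiable. The strategy is to reduce the $10!=3628800$ labelings by symmetry. The quantities in question are invariant under the automorphism group of $\mathbb Z_{10}^2$ acting diagonally on labels. Concretely, replacing $\sigma$ by $\sigma\circ\tau$ with $\tau(x)=ax+b$, $a\in\mathbb Z_{10}^\times$, permutes the Fourier coefficients of $M_\sigma$ up to unimodular phases. This is the same computation as in Proposition \ref{prop:affine-kernels}. The ratio is therefore constant on right cosets of the affine group $\mathrm{AGL}(1,\mathbb Z_{10})$, which has order $40$. It is also constant under $\sigma\mapsto\pi\circ\sigma$ for $\pi\in\operatorname{Aut}(\text{Petersen})\cong S_5$, which has order $120$, because $A$, $\Pi_2$ and $\Pi_5$ are invariant under automorphisms. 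So the computation will run over double cosets $S_5\backslash S_{10}/\mathrm{AGL}(1,\mathbb Z_{10})$. For robustness, however, I will also run the straightforward loop over all of $S_{10}$ (the code in Appendix \ref{app:petersen-code}).

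The concrete steps are as follows.

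First, form $A$ in lexicographic Kneser order. Form $\Pi_2=\frac13(A+2I-\frac12J)$ and $\Pi_5=\frac13(-A+I+\frac15J)$ from \eqref{eq:petersen-projectors} and \eqref{eq:srg-projector-r}, and check that they are idempotent and have ranks $5$ and $4$.

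Second, for each permutation $\sigma$ compute $\widehat{M_\sigma}=UM_\sigma U$ with the $10\times10$ unitary DFT. Record $\|\widehat{M_\sigma}\|_1/\|M_\sigma\|_2$. The denominators are constant: $\sqrt{30}$, $\sqrt5$ and $2$ respectively.

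Third, find the minima and the sets $\mathcal S_M$. Ties must be decided with a tolerance that is justified. To do this, I will identify the minimizing values in closed form: the coefficients lie in $\mathbb Q(\zeta_{10})$, so the minimal $\ell^1$ sums are explicit sums of moduli of cyclotomic integers. I will confirm equality exactly, or with interval arithmetic, and I will show that every non-minimal value exceeds the minimum by a margin much larger than the floating-point error.

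Finally, compare the minimizing sets directly.

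The counts should be consistent with the symmetry. Each of $960$ and $240$ should be a union of full $S_5\times\mathrm{AGL}$ double-coset orbits, and this gives a sanity check. The equality $\mathcal S_{\Pi_2}=\mathcal S_{\Pi_5}$ has a structural explanation that I will verify. Since $\Pi_2+\Pi_5=I-\frac1{10}J$, off the anti-diagonal $m+n=0$ we have $\widehat{\Pi_5}=-\widehat{\Pi_2}$. On the anti-diagonal, $\widehat{\Pi_5}(m,-m)=1-\widehat{\Pi_2}(m,-m)$ for $m\neq0$, and both coefficients vanish at $(0,0)$. Hence the two objectives differ only through anti-diagonal terms, and the computation settles whether their minimizers agree. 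The disjointness from $\mathcal S_A$ is then a finite comparison of lists.

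I expect the main obstacle to be rigor rather than runtime. Floating-point ratios such as $3.5694013108\ldots$ must be certified, and near-ties must be excluded, before the counts $960$ and $240$ can be asserted. I will handle this by exact cyclotomic evaluation of the candidate minima, together with a certified lower bound on the gap to the next value.
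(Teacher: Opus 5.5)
Your plan is correct, and its core is the paper's own proof. The paper evaluates one labeling from each of the $10!/120=30240$ orbits of the free action of $\operatorname{Aut}$; this is what the appendix code loops over, not all of $S_{10}$. It computes $\|UM_\sigma U\|_1/\|M_\sigma\|_2$ in long-double arithmetic, groups values within $10^{-12}$, and reports the gaps to the next values. Your extra right action of $\mathrm{AGL}(1,\mathbb Z_{10})$ is legitimate, and your certification plan is stronger than the paper's numerical grouping, with two caveats:
\begin{itemize}
\item The action of $\operatorname{Aut}\times\mathrm{AGL}$ is not free. Neither $960$ nor $240$ is a multiple of $4800$, so the minimizers have nontrivial stabilizers. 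Counts must therefore come from orbit sizes or from the full loop, not from multiplying a number of double cosets by $4800$.
\item Interval arithmetic can separate values, but it cannot certify that minimizers lying in different double cosets have equal ratios. That needs exact algebraic comparison.
\end{itemize}

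Your remark on $\Pi_2+\Pi_5=I-\frac1{10}J$ stops one step short of a proof. Set $v_m(x)=N^{-1/2}e^{2\pi imx/N}$. Then $\widehat{\Pi}(m,-m)=v_m^*\Pi v_m=\|\Pi v_m\|^2$. So the anti-diagonal coefficients $t_m=\widehat{\Pi_{2,\sigma}}(m,-m)$ are real, lie in $[0,1]$, vanish at $m=0$, and sum to $\operatorname{tr}\Pi_2=5$. Hence the anti-diagonal contributes $\sum_{m\neq0}(|1-t_m|-|t_m|)=9-2\cdot5=-1$, and for every labeling
\[
\|\widehat{\Pi_{5,\sigma}}\|_1=\|\widehat{\Pi_{2,\sigma}}\|_1-1,
\qquad
\operatorname{FR}(\Pi_{5,\sigma})=\frac{\sqrt5\,\operatorname{FR}(\Pi_{2,\sigma})-1}{2}.
\]

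This has three consequences:
\begin{itemize}
\item It proves $\mathcal S_{\Pi_2}=\mathcal S_{\Pi_5}$ outright.
\item It shows the third minimum is determined by the second. This is consistent with the reported $3.5694013108\ldots$ and $3.4907119850\ldots$, and with the sample values $3.8644931738\ldots$ and $3.8206347176\ldots$.
\item It removes $\Pi_5$ from what must be certified.
\end{itemize}
The paper only observes this coincidence numerically. The same argument gives $\|\widehat{\Pi_{b,\sigma}}\|_1-\|\widehat{\Pi_{a,\sigma}}\|_1=m(b)-m(a)$ for any connected graph with exactly three Laplacian eigenvalues.

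No such identity ties $A$ to $\Pi_2$. The corresponding anti-diagonal values $v_m^*A_\sigma v_m\in[-2,1]$, for $m\neq0$, can have either sign. So nothing comparable makes the disjointness from $\mathcal S_A$ automatic, and that part genuinely rests on the enumeration.
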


\begin{proof}
The automorphism group of the Petersen graph is the group induced by permutations of $\{0,1,2,3,4\}$ and has order $120$. Its action on the set of labelings is free. It is therefore enough to evaluate one labeling from each of the
\[
\frac{10!}{120}=30240
\]
orbits.

For every orbit representative $\sigma$ and each of the three matrices $A$, $\Pi_2$, and $\Pi_5$, the computation forms $UM_\sigma U$, sums the absolute values of its entries, and divides by the Frobenius norm. The minimizing orbit counts are $8$, $2$, and $2$, respectively. Multiplication by $120$ gives the displayed numbers of labelings.

The computation was carried out with complex long-double arithmetic, and values within $10^{-12}$ were grouped. The two projector minima occur on the same two orbits, while none of the eight adjacency-minimizing orbits is one of those two orbits. The next distinct Fourier-ratio values exceed the three minima by more than $0.143$, $0.295$, and $0.329$, respectively. Thus, the classification of the minimizing orbits is insensitive to numerical grouping at the precision used to report the displayed values. The vertex convention, the projector formulas, and the orbit count above determine the computation without any external graph data. Complete executable code is given in Appendix \ref{app:petersen-code}.
\end{proof}

One adjacency-minimizing labeling is
\[
(0,1,2,5,9,3,8,7,4,6),
\]
for which
\[
\operatorname{FR}(\Pi_2)=3.8644931738\ldots
\]
and
\[
\operatorname{FR}(\Pi_5)=3.8206347176\ldots.
\]
One labeling minimizing both projectors is
\[
(0,1,4,5,7,8,9,2,3,6),
\]
for which
\[
\operatorname{FR}(A)=3.8238833919\ldots.
\]

The Petersen computation shows that simultaneous optimality for circulant graphs is a genuine structural feature and not a general property of spectral projectors. It also suggests two different optimization problems. One may seek a labeling that minimizes the edge relation, or one may seek a labeling that makes a selected collection of harmonic sectors simultaneously compressible.

\section{Projector sampling and heat-kernel approximation}\label{sec:projector-recovery}

Theorem \ref{thm:fr-recovery} applies directly to a projector kernel in the observation model of Remark \ref{rem:projector-oracle}. Orthogonal projectors satisfy
\[
|\Pi_\lambda(x,y)|^2
\leq
\Pi_\lambda(x,x)\Pi_\lambda(y,y)
\leq
1,
\]
because they are positive semidefinite and have diagonal entries between zero and one.

\begin{corollary}[Stable recovery of a sampled projector kernel]\label{cor:projector-recovery}
Fix a labeling $\sigma$ and an eigenvalue $\lambda$. Suppose
\[
\operatorname{FR}(\Pi_{\lambda,\sigma})
\leq
r.
\]
If entries of $\Pi_{\lambda,\sigma}$ are observed on a Bernoulli-$p$ subset $X\subset\mathbb Z_N^2$ and \eqref{eq:graph-sample-bound} holds, then, with probability at least $1-e^{-cpN^2}$, a Fourier-$\ell^1$ minimizer $\Pi_\lambda^\sharp$ satisfies
\begin{equation}\label{eq:projector-recovery-error}
\|\Pi_\lambda^\sharp-\Pi_{\lambda,\sigma}\|_2
\leq
11.47\epsilon\sqrt{m(\lambda)}.
\end{equation}
\end{corollary}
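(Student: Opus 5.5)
The plan is to apply Theorem \ref{thm:fr-recovery} directly, as Corollary \ref{cor:adjacency-recovery} did, with $H=\mathbb Z_N^2$, $M=N^2$ and $f=\Pi_{\lambda,\sigma}$. The first step is to check the hypotheses of that theorem. The kernel $\Pi_{\lambda,\sigma}$ is nonzero, since $m(\lambda)\geq1$. The transform $\widehat F$ defined in \eqref{eq:fourier-transform} is exactly the unitary Fourier transform on $\mathbb Z_N^2$, because its normalizing factor is $\frac{1}{N}=|H|^{-1/2}$. The quantity $\operatorname{FR}(f)$ in Theorem \ref{thm:fr-recovery} therefore coincides with the Fourier ratio defined in \eqref{eq:fourier-ratio}, and the assumption $\operatorname{FR}(\Pi_{\lambda,\sigma})\leq r$ is exactly the hypothesis needed there. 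With $M=N^2$, the sample condition \eqref{eq:recovery-sample-bound} becomes \eqref{eq:graph-sample-bound} verbatim.

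The second step is to read off the conclusion. With probability at least $1-e^{-cpN^2}$, any minimizer $\Pi^\sharp_\lambda$ of \eqref{eq:recovery-program}, with $f=\Pi_{\lambda,\sigma}$, satisfies
\[
\|\Pi^\sharp_\lambda-\Pi_{\lambda,\sigma}\|_2\leq 11.47\,\epsilon\,\|\Pi_{\lambda,\sigma}\|_2 .
\]
The remaining step is to compute the Frobenius norm. The projector is Hermitian and idempotent, so
\[
\|\Pi_{\lambda,\sigma}\|_2^2=\operatorname{tr}(\Pi_{\lambda,\sigma}^*\Pi_{\lambda,\sigma})=\operatorname{tr}(\Pi_{\lambda,\sigma})=m(\lambda).
\]
This is the same computation used in the proof of Theorem \ref{thm:projector-lower-bound}, and it is unaffected by relabeling, which is conjugation by a permutation matrix. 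Substituting $\|\Pi_{\lambda,\sigma}\|_2=\sqrt{m(\lambda)}$ gives \eqref{eq:projector-recovery-error}.

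There is no real obstacle here. The only points requiring care are that the normalization of \eqref{eq:fourier-transform} matches the unitary transform assumed in Theorem \ref{thm:fr-recovery}, and that the feasible radius in \eqref{eq:recovery-program} is $\epsilon\sqrt{m(\lambda)}$. In parallel with Remark \ref{rem:recovery-scope}, this means $m(\lambda)$ must be known, which is natural in the sampling model of Remark \ref{rem:projector-oracle}. The entrywise bound $|\Pi_\lambda(x,y)|\leq1$ recorded before the corollary is not needed for the Frobenius estimate itself. It would matter only for any subsequent entrywise thresholding interpretation.
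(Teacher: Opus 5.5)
Your proposal is correct and matches the paper's argument. The paper gives no separate proof: it notes that Theorem \ref{thm:fr-recovery} applies directly with $H=\mathbb Z_N^2$ and $f=\Pi_{\lambda,\sigma}$, and then uses $\|\Pi_{\lambda,\sigma}\|_2=\sqrt{m(\lambda)}$ to turn the relative error bound into \eqref{eq:projector-recovery-error}.
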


We next isolate the deterministic step that converts projector approximations into a heat-kernel approximation. This statement makes clear the role of a common labeling and the effect of omitted eigenspaces.

\begin{proposition}[Heat-kernel approximation from projector approximations]\label{prop:heat-kernel}
Let
\[
L
=
\sum_\lambda\lambda\Pi_\lambda
\]
be the Laplacian spectral decomposition in one fixed labeling. Fix $t\geq0$. Let $\Lambda$ be a set of eigenvalues, and suppose matrices $\widetilde\Pi_\lambda$ satisfy
\[
\|\widetilde\Pi_\lambda-\Pi_\lambda\|_2
\leq
\delta_\lambda
\]
for every $\lambda\in\Lambda$. Define
\[
\widetilde H_{t,\Lambda}
=
\sum_{\lambda\in\Lambda}
e^{-t\lambda}\widetilde\Pi_\lambda.
\]
Then
\begin{equation}\label{eq:heat-kernel-bound}
\|\widetilde H_{t,\Lambda}-e^{-tL}\|_2
\leq
\sum_{\lambda\in\Lambda}
e^{-t\lambda}\delta_\lambda
+
(
\sum_{\lambda\notin\Lambda}
e^{-2t\lambda}m(\lambda)
)^{\frac{1}{2}}.
\end{equation}
\end{proposition}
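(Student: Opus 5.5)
The plan is to write the exact heat kernel through the spectral decomposition and split the error into an approximation part and a truncation part. Since $L=\sum_\lambda\lambda\Pi_\lambda$ with the $\Pi_\lambda$ mutually orthogonal projectors summing to the identity, functional calculus gives
\[
e^{-tL}
=
\sum_\lambda e^{-t\lambda}\Pi_\lambda,
\]
where the sum runs over all distinct Laplacian eigenvalues. Subtracting this from $\widetilde H_{t,\Lambda}$ gives
\[
\widetilde H_{t,\Lambda}-e^{-tL}
=
\sum_{\lambda\in\Lambda}e^{-t\lambda}(\widetilde\Pi_\lambda-\Pi_\lambda)
-
\sum_{\lambda\notin\Lambda}e^{-t\lambda}\Pi_\lambda.
\]
Everything is computed in the single fixed labeling, so all these matrices live on the same index set. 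This is the point where the common labeling is used.

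Next I apply the triangle inequality for the Frobenius norm to the first sum. Since $e^{-t\lambda}\geq0$, each term is bounded by $e^{-t\lambda}\delta_\lambda$, which yields the first term of \eqref{eq:heat-kernel-bound}.

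For the truncation sum I do not use the triangle inequality, because that would lose the square-root form. Instead I use orthogonality. For $\lambda\neq\mu$, the Hilbert--Schmidt inner product satisfies
\[
\operatorname{tr}(\Pi_\lambda^*\Pi_\mu)
=
\operatorname{tr}(\Pi_\lambda\Pi_\mu)
=
0,
\]
since distinct eigenspaces of the symmetric matrix $L$ are orthogonal. The matrices $\Pi_\lambda$ are therefore pairwise orthogonal in the Frobenius inner product, with $\|\Pi_\lambda\|_2^2=\operatorname{tr}\Pi_\lambda=m(\lambda)$. Pythagoras then gives
\[
\Bigl\|\sum_{\lambda\notin\Lambda}e^{-t\lambda}\Pi_\lambda\Bigr\|_2^2
=
\sum_{\lambda\notin\Lambda}e^{-2t\lambda}m(\lambda).
\]
Taking square roots and combining with the first estimate through one more triangle inequality proves \eqref{eq:heat-kernel-bound}.

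There is no real obstacle here. The only step requiring care is recognizing that the omitted part should be handled by Frobenius orthogonality rather than the triangle inequality, since that is exactly what produces the $\ell^2$ sum in the statement. If $\Lambda$ is the full spectrum, the second term is $0$ and the bound reduces to the plain triangle inequality.
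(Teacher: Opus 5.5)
Your proposal is correct and follows the paper's proof essentially verbatim: you make the same split into an approximation sum, controlled by the triangle inequality, and a truncation sum, computed exactly using Frobenius orthogonality of the projectors and $\|\Pi_\lambda\|_2^2=m(\lambda)$. You also make explicit the identity $e^{-tL}=\sum_\lambda e^{-t\lambda}\Pi_\lambda$, which the paper uses without comment.
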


\begin{proof}
Write
\[
\widetilde H_{t,\Lambda}-e^{-tL}
=
\sum_{\lambda\in\Lambda}
e^{-t\lambda}
(\widetilde\Pi_\lambda-\Pi_\lambda)
-
\sum_{\lambda\notin\Lambda}
e^{-t\lambda}\Pi_\lambda.
\]
The first sum is bounded by the triangle inequality. The projectors in the second sum are pairwise orthogonal in the Frobenius inner product, and $\|\Pi_\lambda\|_2^2=m(\lambda)$. Its squared Frobenius norm is therefore
\[
\sum_{\lambda\notin\Lambda}
e^{-2t\lambda}m(\lambda).
\]
Combining the two estimates proves \eqref{eq:heat-kernel-bound}.
\end{proof}

\begin{corollary}[Heat-kernel recovery in the projector-sampling model]\label{cor:heat-recovery}
Fix $t\geq0$, one labeling, and a set $\Lambda$ of Laplacian eigenvalues such that
\[
\operatorname{FR}(\Pi_\lambda)
\leq
r
\]
for every $\lambda\in\Lambda$. Assume the eigenvalues are known and the entries of every $\Pi_\lambda$, $\lambda\in\Lambda$, are observed on the same Bernoulli-$p$ set $X\subset\mathbb Z_N^2$. If \eqref{eq:graph-sample-bound} holds, then with probability at least
\[
1-|\Lambda|e^{-cpN^2}
\]
there are Fourier-$\ell^1$ reconstructions for which
\begin{equation}\label{eq:heat-recovery-bound}
\|\widetilde H_{t,\Lambda}-e^{-tL}\|_2
\leq
11.47\epsilon
\sum_{\lambda\in\Lambda}
e^{-t\lambda}\sqrt{m(\lambda)}
+
(
\sum_{\lambda\notin\Lambda}
e^{-2t\lambda}m(\lambda)
)^{\frac{1}{2}}.
\end{equation}
\end{corollary}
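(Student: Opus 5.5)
The plan is to combine Corollary \ref{cor:projector-recovery}, applied once for each $\lambda\in\Lambda$, with the deterministic estimate of Proposition \ref{prop:heat-kernel}.

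Fix the labeling and, for each $\lambda\in\Lambda$, apply Theorem \ref{thm:fr-recovery} on $H=\mathbb Z_N^2$ with $f=\Pi_\lambda$. The hypothesis $\operatorname{FR}(\Pi_\lambda)\leq r$ holds, and the sample condition \eqref{eq:graph-sample-bound} is exactly \eqref{eq:recovery-sample-bound} with $M=N^2$. For a fixed $\lambda$, Corollary \ref{cor:projector-recovery} therefore gives an event $\mathcal G_\lambda$ with $\mathbb P(\mathcal G_\lambda^c)\leq e^{-cpN^2}$. On this event every minimizer $\widetilde\Pi_\lambda$ of \eqref{eq:recovery-program}, with $f=\Pi_\lambda$, satisfies
\[
\|\widetilde\Pi_\lambda-\Pi_\lambda\|_2\leq 11.47\epsilon\|\Pi_\lambda\|_2=11.47\epsilon\sqrt{m(\lambda)}.
\]
Here $\|\Pi_\lambda\|_2=\sqrt{m(\lambda)}$ because $\Pi_\lambda$ is an orthogonal projector of rank $m(\lambda)$.

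The one point that needs care is that all these reconstructions use the same random set $X$. The events $\mathcal G_\lambda$ are therefore dependent, and we cannot multiply probabilities. The union bound does not need independence: each $\mathcal G_\lambda^c$ has probability at most $e^{-cpN^2}$ under the Bernoulli-$p$ law of $X$, whatever the joint law. Hence
\[
\mathbb P\Bigl(\bigcap_{\lambda\in\Lambda}\mathcal G_\lambda\Bigr)\geq 1-|\Lambda|e^{-cpN^2}.
\]
We should also check that the Theorem's guarantee is stated for one fixed $f$ with randomness only in $X$, and that it does not depend on any choice tied to $f$. That is true here, so the union bound over finitely many fixed kernels is legitimate.

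On the intersection event, set $\delta_\lambda=11.47\epsilon\sqrt{m(\lambda)}$ and form
\[
\widetilde H_{t,\Lambda}=\sum_{\lambda\in\Lambda}e^{-t\lambda}\widetilde\Pi_\lambda.
\]
This requires the eigenvalues to be known, which is one of the hypotheses. Proposition \ref{prop:heat-kernel} then gives \eqref{eq:heat-kernel-bound}. Substituting the values of $\delta_\lambda$ and pulling out the common factor $11.47\epsilon$ yields \eqref{eq:heat-recovery-bound}. The tail term $\bigl(\sum_{\lambda\notin\Lambda}e^{-2t\lambda}m(\lambda)\bigr)^{1/2}$ is carried over unchanged.

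There is no real obstacle beyond the union bound over a shared sample. The rest is direct substitution.
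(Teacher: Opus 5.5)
Your proposal is correct and follows the paper's proof: apply Corollary \ref{cor:projector-recovery} to each $\lambda\in\Lambda$, take a union bound over the shared sample $X$, and substitute $\delta_\lambda=11.47\epsilon\sqrt{m(\lambda)}$ into Proposition \ref{prop:heat-kernel}. Your remark that the union bound needs no independence is the one point the paper leaves implicit.
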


\begin{proof}
Apply Corollary \ref{cor:projector-recovery} to each $\lambda\in\Lambda$ and use the union bound. Then substitute
\[
\delta_\lambda
=
11.47\epsilon\sqrt{m(\lambda)}
\]
into Proposition \ref{prop:heat-kernel}.
\end{proof}

If $\Lambda$ contains every eigenvalue below a threshold $R$, then the omitted tail is bounded by
\[
\left(\sum_{\lambda\geq R}
e^{-2t\lambda}m(\lambda)
\right)^{\frac{1}{2}}
\leq
\sqrt N e^{-tR}.
\]
This explains why low-frequency projector information is particularly relevant for long-time diffusion. The heat kernel and its low-frequency truncations are standard tools in diffusion geometry and spectral clustering \cite{Chung,CoifmanLafon}.

The individual quantities $\operatorname{FR}_{\min}(\Pi_\lambda)$ do not provide simultaneous data. Outside families such as circulant graphs, the minimizing labeling may depend on $\lambda$. Corollary \ref{cor:heat-recovery} therefore requires bounds for all retained projectors in one common labeling.

For a fixed labeling $\sigma$ and a threshold $r$, one may define
\[
\Lambda_{\sigma,r}
=
\{\lambda:\operatorname{FR}(\Pi_{\lambda,\sigma})\leq r\}
\]
and the corresponding decomposition
\[
L
=
L_{\operatorname{str}}^{\sigma,r}
+
L_{\operatorname{rem}}^{\sigma,r},
\]
where
\[
L_{\operatorname{str}}^{\sigma,r}
=
\sum_{\lambda\in\Lambda_{\sigma,r}}
\lambda\Pi_\lambda
\]
and
\[
L_{\operatorname{rem}}^{\sigma,r}
=
\sum_{\lambda\notin\Lambda_{\sigma,r}}
\lambda\Pi_\lambda.
\]
This is a coordinate-dependent decomposition of the Laplacian into projectors that are Fourier-compressible in the chosen labeling and those that are not. It should not be defined using separate minimizing labelings for separate eigenspaces.

\section{A graph-signal spectral synthesis principle}\label{sec:synthesis}

The preceding sections concern kernels on $\mathbb Z_N^2$. We now consider functions on the vertex set itself. The purpose of this section is to preserve a second theme of the original problem while keeping its hypotheses and conclusions separate from kernel recovery.

Let $G_N$ be a graph on $\mathbb Z_N$, and let
\[
L_N
=
\sum_\lambda\lambda\Pi_{\lambda,N}
\]
be the spectral decomposition of its Laplacian. In this section, the vertex-space norm is normalized:
\begin{equation}\label{eq:normalized-vertex-norm}
\|u\|_{2,N}
=
(
\frac{1}{N}
\sum_{x\in\mathbb Z_N}|u(x)|^2
)^{\frac{1}{2}}.
\end{equation}
Multiplication of the inner product by $\frac{1}{N}$ does not change the orthogonal projectors. For a graph signal $u_N$, define
\[
c_{\lambda,N}
=
\|\Pi_{\lambda,N}u_N\|_{2,N}.
\]
Orthogonality gives
\begin{equation}\label{eq:block-parseval}
\sum_\lambda c_{\lambda,N}^2
=
\|u_N\|_{2,N}^2.
\end{equation}

For $u_N\neq0$, define the graph spectral Fourier ratio
\begin{equation}\label{eq:sfr-definition}
\operatorname{SFR}_N(u_N)
=
\frac{1}{\sqrt N}
\frac{
\sum_\lambda c_{\lambda,N}
}{
\|u_N\|_{2,N}
}.
\end{equation}
This quantity is the normalized quotient of the $\ell^1$ and $\ell^2$ norms of the vector of spectral-block norms. It is not the same as the Fourier ratio of a projector kernel.

If $q_N$ is the number of distinct Laplacian eigenvalues, then Cauchy--Schwarz gives
\begin{equation}\label{eq:sfr-trivial-bound}
\operatorname{SFR}_N(u_N)
\leq
\sqrt{\frac{q_N}{N}}
\leq
1.
\end{equation}
Thus, the estimate with exponent $\kappa=0$ is automatic. A positive value of $\kappa$ records additional concentration among the spectral blocks.

The following statement is a finite graph analogue of a spectral synthesis principle. The argument is elementary once the normalizations are fixed, but it gives a useful quantitative obstruction to simultaneous spatial concentration and spectral regularity. Related questions on manifolds are studied in \cite{IosevichMayeliWyman}.

\begin{theorem}[Asymptotic graph spectral synthesis]\label{thm:graph-spectral-synthesis}
Let $G_N$ be a sequence of graphs on $\mathbb Z_N$. Let $u_N:\mathbb Z_N\to\mathbb C$ be supported on a set $E_N$ satisfying
\begin{equation}\label{eq:synthesis-support}
|E_N|
\leq
C N^\alpha
\end{equation}
for some $0<\alpha<1$. Suppose that, for some $2\leq p<\infty$,
\begin{equation}\label{eq:spectral-lp}
\left(\sum_\lambda c_{\lambda,N}^p
\right)^{\frac{1}{p}}
\leq
C_p
\end{equation}
uniformly in $N$. Suppose also that, for some $0\leq\kappa\leq\frac{\alpha}{2}$,
\begin{equation}\label{eq:sfr-decay}
\operatorname{SFR}_N(u_N)
\leq
C_\kappa N^{-\kappa}.
\end{equation}
If $\kappa<\frac{\alpha}{2}$, assume
\begin{equation}\label{eq:synthesis-p-range}
2\leq p
<
\frac{2(1-2\kappa)}{\alpha-2\kappa}.
\end{equation}
If $\kappa=\frac{\alpha}{2}$, no additional restriction is placed on the finite exponent $p$. Then
\begin{equation}\label{eq:synthesis-l1-conclusion}
\frac{1}{N}
\sum_{x\in\mathbb Z_N}|u_N(x)|
\longrightarrow
0.
\end{equation}
Consequently, for every sequence $\phi_N:\mathbb Z_N\to\mathbb C$ satisfying $\|\phi_N\|_\infty\leq1$,
\begin{equation}\label{eq:synthesis-test-conclusion}
\frac{1}{N}
\sum_{x\in\mathbb Z_N}u_N(x)\phi_N(x)
\longrightarrow
0.
\end{equation}
\end{theorem}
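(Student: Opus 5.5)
The plan is to combine a spatial Cauchy--Schwarz estimate on the support $E_N$ with a spectral interpolation between the $\ell^1$ and $\ell^p$ norms of the block vector $(c_{\lambda,N})_\lambda$. We may assume $u_N\neq0$, since otherwise there is nothing to prove.

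\textbf{Step 1 (spatial side).} Since $u_N$ vanishes off $E_N$, Cauchy--Schwarz and \eqref{eq:synthesis-support} give
\[
\frac{1}{N}\sum_{x}|u_N(x)|
\leq
\Big(\frac{|E_N|}{N}\Big)^{\frac12}\|u_N\|_{2,N}
\leq
C^{\frac12}N^{-\frac{1-\alpha}{2}}\|u_N\|_{2,N}.
\]
It therefore suffices to prove $\|u_N\|_{2,N}=o\big(N^{\frac{1-\alpha}{2}}\big)$.

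\textbf{Step 2 (interpolation among spectral blocks).} Write $c=(c_{\lambda,N})_\lambda$. By \eqref{eq:block-parseval}, $\|c\|_{\ell^2}=\|u_N\|_{2,N}$. By \eqref{eq:sfr-definition} and \eqref{eq:sfr-decay},
\[
\|c\|_{\ell^1}
=\sqrt N\,\operatorname{SFR}_N(u_N)\,\|u_N\|_{2,N}
\leq C_\kappa N^{\frac12-\kappa}\|u_N\|_{2,N}.
\]
Hölder's inequality (log-convexity of $\ell^q$ norms) gives $\|c\|_2\leq\|c\|_1^{1-\theta}\|c\|_p^{\theta}$, where
\[
\frac12=(1-\theta)+\frac{\theta}{p},
\qquad\text{so}\qquad
\theta=\frac{p}{2(p-1)}\in\Big(\tfrac12,1\Big].
\]
Substituting the $\ell^1$ bound and \eqref{eq:spectral-lp} yields
\[
\|u_N\|_{2,N}\leq \big(C_\kappa N^{\frac12-\kappa}\|u_N\|_{2,N}\big)^{1-\theta}C_p^{\theta}.
\]
We then divide by $\|u_N\|_{2,N}^{1-\theta}$ and take the $\theta$-th root. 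Using $\frac{1-\theta}{\theta}=\frac{p-2}{p}$, this gives
\[
\|u_N\|_{2,N}\leq C' N^{(\frac12-\kappa)\frac{p-2}{p}}.
\]
The case $p=2$ is included: then $\theta=1$ and $\|u_N\|_{2,N}\leq C_2$ directly.

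\textbf{Step 3 (exponent comparison).} Combining Steps 1 and 2, the normalized $\ell^1$ norm is $O(N^{-\gamma})$ with
\[
\gamma=\frac{1-\alpha}{2}-\Big(\frac12-\kappa\Big)\frac{p-2}{p}.
\]
The condition $\gamma>0$ is equivalent to $(1-2\kappa)(p-2)<(1-\alpha)p$, that is, $p(\alpha-2\kappa)<2(1-2\kappa)$.
\begin{itemize}
\item If $\kappa<\frac{\alpha}{2}$, this is exactly \eqref{eq:synthesis-p-range}.
\item If $\kappa=\frac{\alpha}{2}$, the condition reads $(1-\alpha)\frac{p-2}{p}<1-\alpha$. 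This holds for every finite $p$ because $\alpha<1$.
\end{itemize}
Hence \eqref{eq:synthesis-l1-conclusion} holds. Finally, \eqref{eq:synthesis-test-conclusion} follows from
\[
\Big|\frac1N\sum_x u_N(x)\phi_N(x)\Big|\leq\frac1N\sum_x|u_N(x)|,
\]
which uses $\|\phi_N\|_\infty\leq1$.

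\textbf{Main obstacle.} The only delicate point is the bookkeeping in Step 2: choosing the correct interpolation parameter $\theta$ and absorbing the factor $\|u_N\|_{2,N}^{1-\theta}$ from the right-hand side, which is legitimate because $u_N\neq0$. One must also check that the resulting exponent condition reproduces \eqref{eq:synthesis-p-range} precisely, including the endpoint case $\kappa=\frac{\alpha}{2}$. No deeper input from earlier sections is needed beyond the block Parseval identity \eqref{eq:block-parseval}.
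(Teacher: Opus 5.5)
Your proof is correct and follows the paper's argument essentially step for step. Both use Hölder interpolation of the block-norm vector between $\ell^1$ and $\ell^p$, absorb the factor $\|u_N\|_{2,N}$ to get $\|u_N\|_{2,N}\lesssim N^{(\frac12-\kappa)\frac{p-2}{p}}$, apply Cauchy--Schwarz on the support, and run the same exponent comparison, including the endpoint $\kappa=\frac{\alpha}{2}$ (your $\theta$ is the paper's $1-\theta$), while your explicit treatment of $u_N=0$ and of $p=2$ makes precise points the paper leaves implicit.
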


\begin{proof}
Set
\[
\theta
=
\frac{p-2}{2(p-1)}.
\]
Then
\[
\frac{1}{2}
=
\theta+
\frac{1-\theta}{p}.
\]
Interpolation between $\ell^1$ and $\ell^p$ gives
\[
\left(\sum_\lambda c_{\lambda,N}^2
\right)^{\frac{1}{2}}
\leq
\left(\sum_\lambda c_{\lambda,N}
\right)^\theta
\left(
\sum_\lambda c_{\lambda,N}^p
\right)^{\frac{1-\theta}{p}}.
\]
By \eqref{eq:sfr-definition}, \eqref{eq:sfr-decay}, and \eqref{eq:block-parseval},
\[
\sum_\lambda c_{\lambda,N}
\leq
C_\kappa N^{\frac{1}{2}-\kappa}
\|u_N\|_{2,N}.
\]
Using \eqref{eq:spectral-lp}, we obtain
\[
\|u_N\|_{2,N}
\leq
C
N^{(\frac{1}{2}-\kappa)\theta}
\|u_N\|_{2,N}^\theta.
\]
Therefore,
\[
\|u_N\|_{2,N}
\leq
C
N^{(\frac{1}{2}-\kappa)\frac{p-2}{p}}.
\]

Since $u_N$ is supported on $E_N$, Cauchy--Schwarz gives
\[
\frac{1}{N}
\sum_x|u_N(x)|
\leq
\left(\frac{|E_N|}{N}\right)^{\frac{1}{2}}
\|u_N\|_{2,N}.
\]
The right side is bounded by a constant times
\[
N^{
(\frac{1}{2}-\kappa)\frac{p-2}{p}
+
\frac{\alpha-1}{2}
}.
\]
If $\kappa<\frac{\alpha}{2}$, this exponent is negative precisely when
\[
p(\alpha-2\kappa)
<
2(1-2\kappa),
\]
which is \eqref{eq:synthesis-p-range}. If $\kappa=\frac{\alpha}{2}$, the exponent equals
\[
-\frac{1-\alpha}{p},
\]
which is negative for every finite $p$. This proves \eqref{eq:synthesis-l1-conclusion}. The test-function conclusion follows from
\[
|
\frac{1}{N}
\sum_xu_N(x)\phi_N(x)
|
\leq
\frac{1}{N}
\sum_x|u_N(x)|.
\]
\end{proof}

The conclusion in \eqref{eq:synthesis-l1-conclusion} is equivalent to uniform convergence against all test functions bounded by one, because one may choose the pointwise phase of $u_N$ as the test function. It is therefore more precise to describe the result as normalized $\ell^1$ decay rather than only weak convergence.

If $u_N\geq0$ and
\[
\frac{1}{N}
\sum_xu_N(x)
\geq
c>0,
\]
then the conclusion is impossible. Thus, a sequence of nonnegative signals carrying a fixed positive normalized mass cannot satisfy all of the spatial and spectral concentration hypotheses in Theorem \ref{thm:graph-spectral-synthesis}.

\begin{corollary}[Asymptotic uniqueness from incomplete graph data]\label{cor:graph-incomplete-recovery}
Let $M_N\subset\mathbb Z_N$ satisfy
\[
|M_N|
\leq
C N^\alpha
\]
for some $0<\alpha<1$. Let $f_N,g_N:\mathbb Z_N\to\mathbb C$ agree on $\mathbb Z_N\setminus M_N$. Suppose that, for some
\[
2\leq p<\frac{2}{\alpha},
\]
both sequences satisfy
\[
\left(\sum_\lambda
\|\Pi_{\lambda,N}f_N\|_{2,N}^p
\right)^{\frac{1}{p}}
\leq
C_p
\]
and
\[
\left(\sum_\lambda
\|\Pi_{\lambda,N}g_N\|_{2,N}^p
\right)^{\frac{1}{p}}
\leq
C_p.
\]
Then
\begin{equation}\label{eq:incomplete-l1}
\frac{1}{N}
\sum_{x\in\mathbb Z_N}|f_N(x)-g_N(x)|
\longrightarrow
0.
\end{equation}
\end{corollary}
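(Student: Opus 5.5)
The plan is to apply Theorem \ref{thm:graph-spectral-synthesis} to the difference $u_N=f_N-g_N$ with $\kappa=0$. This case needs no decay hypothesis, because \eqref{eq:sfr-trivial-bound} gives $\operatorname{SFR}_N(u_N)\leq1$ automatically. Since $f_N$ and $g_N$ agree off $M_N$, the signal $u_N$ is supported on $E_N=M_N$, which satisfies \eqref{eq:synthesis-support}. With $\kappa=0$, the range condition \eqref{eq:synthesis-p-range} becomes $2\leq p<\frac{2}{\alpha}$, which is exactly the hypothesis of the corollary.

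The remaining hypothesis is the uniform spectral $\ell^p$ bound \eqref{eq:spectral-lp} for $u_N$. Each $\Pi_{\lambda,N}$ is linear, so
\[
c_{\lambda,N}=\|\Pi_{\lambda,N}(f_N-g_N)\|_{2,N}\leq\|\Pi_{\lambda,N}f_N\|_{2,N}+\|\Pi_{\lambda,N}g_N\|_{2,N}.
\]
Minkowski's inequality in $\ell^p$ over the index $\lambda$ then gives
\[
\left(\sum_\lambda c_{\lambda,N}^p\right)^{\frac{1}{p}}\leq 2C_p
\]
uniformly in $N$.

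One degenerate point needs care. $\operatorname{SFR}_N$ is defined only for $u_N\neq0$, so I handle indices with $u_N=0$ separately: for those $N$ the normalized $\ell^1$ quantity is zero. I then apply the theorem along the remaining indices. The cleaner route is to check the proof directly: the chain of inequalities there yields $\|u_N\|_{2,N}\leq CN^{\frac{p-2}{2p}}$, and this holds trivially when $u_N=0$. So the bound holds for every $N$.

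With this bound, \eqref{eq:synthesis-l1-conclusion} gives \eqref{eq:incomplete-l1}. There is no real obstacle here. The only step that needs attention is confirming that the exponent $\frac{p-2}{2p}+\frac{\alpha-1}{2}$ is negative precisely when $p<\frac{2}{\alpha}$, and this matches the specialization of \eqref{eq:synthesis-p-range} at $\kappa=0$.
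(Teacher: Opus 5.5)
Your proposal is correct and matches the paper's proof. The paper applies Theorem \ref{thm:graph-spectral-synthesis} with $\kappa=0$ to $h_N=f_N-g_N$, supported on $M_N$, gets the spectral $\ell^p$ bound from the triangle inequality, uses the trivial estimate \eqref{eq:sfr-trivial-bound} for the $\operatorname{SFR}$ hypothesis, and treats $h_N=0$ separately, exactly as you do.
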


\begin{proof}
Let $h_N=f_N-g_N$. Then $h_N$ is supported on $M_N$, and the triangle inequality gives a uniform $\ell^p$ bound for its spectral-block norms. If $h_N\neq0$, the trivial estimate \eqref{eq:sfr-trivial-bound} gives
\[
\operatorname{SFR}_N(h_N)
\leq
1.
\]
Theorem \ref{thm:graph-spectral-synthesis} applies with $\kappa=0$ and gives \eqref{eq:incomplete-l1}. If $h_N=0$, the conclusion is immediate.
\end{proof}

Corollary \ref{cor:graph-incomplete-recovery} is an asymptotic uniqueness statement in normalized $\ell^1$. It does not say that $f_N=g_N$ for each fixed $N$. The conclusion is that two spectrally regular signals agreeing outside a sublinear exceptional set cannot remain macroscopically different on that exceptional set.

There is an exact finite-dimensional continuation mechanism when the admissible signal lies in one eigenspace. Let $M\subset\mathbb Z_N$ be the missing set and let $\Omega=\mathbb Z_N\setminus M$. Decompose the Laplacian into blocks relative to $M\cup\Omega$.

\begin{proposition}[Continuation of a Laplacian eigenfunction]\label{prop:helmholtz-continuation}
Suppose
\[
(L-\lambda I)u=0
\]
and the values of $u$ are known on $\Omega$. If the principal matrix
\[
L_{MM}-\lambda I
\]
is invertible, then the values on $M$ are uniquely determined by
\begin{equation}\label{eq:helmholtz-formula}
u_M
=
-(L_{MM}-\lambda I)^{-1}L_{M\Omega}u_\Omega.
\end{equation}
\end{proposition}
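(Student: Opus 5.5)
The plan is to write the eigenvalue equation in block form relative to the partition $\mathbb Z_N=M\cup\Omega$ and read off the rows indexed by $M$. Order the coordinates so that $M$ comes first, and decompose
\[
L-\lambda I
=
\begin{pmatrix}
L_{MM}-\lambda I & L_{M\Omega}\\
L_{\Omega M} & L_{\Omega\Omega}-\lambda I
\end{pmatrix},
\qquad
u=
\begin{pmatrix}
u_M\\ u_\Omega
\end{pmatrix}.
\]
Here $L_{MM}$ and $L_{M\Omega}$ are the submatrices of $L$ with rows in $M$ and columns in $M$ or $\Omega$, respectively. The identity contributes only to the diagonal blocks, since it has no off-diagonal entries.

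Since $(L-\lambda I)u=0$, every block row vanishes. I will use only the first block row:
\[
(L_{MM}-\lambda I)u_M+L_{M\Omega}u_\Omega=0.
\]
The vector $u_\Omega$ is known by hypothesis, so $L_{M\Omega}u_\Omega$ is a known vector. Invertibility of $L_{MM}-\lambda I$ then gives
\[
u_M=-(L_{MM}-\lambda I)^{-1}L_{M\Omega}u_\Omega,
\]
which is \eqref{eq:helmholtz-formula}.

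For uniqueness, suppose $u$ and $v$ are two eigenfunctions for $\lambda$ that agree on $\Omega$. Their difference $w=u-v$ satisfies $(L-\lambda I)w=0$ and $w_\Omega=0$. The first block row reduces to $(L_{MM}-\lambda I)w_M=0$, so $w_M=0$ by invertibility. Equivalently, the formula expresses $u_M$ as a function of $u_\Omega$ alone, which already gives uniqueness.

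There is no real obstacle here. The only points needing care are bookkeeping: the restriction of $\lambda I$ to the $M\times M$ block is $\lambda I_M$, and the second block row is never used. That row is a compatibility condition on $u_\Omega$, automatically satisfied because $u$ is an eigenfunction, and it plays no role in determining $u_M$.
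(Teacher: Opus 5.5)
Your proof is correct and is the same argument as the paper's. Both read off the rows of $(L-\lambda I)u=0$ indexed by $M$ and invert $L_{MM}-\lambda I$; your separate uniqueness check via $w=u-v$ is redundant, since the formula already determines $u_M$ from $u_\Omega$, but it is harmless.
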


\begin{proof}
The rows of $(L-\lambda I)u=0$ indexed by $M$ give
\[
(L_{MM}-\lambda I)u_M
=
-L_{M\Omega}u_\Omega.
\]
Invertibility gives \eqref{eq:helmholtz-formula}.
\end{proof}

This is a discrete Helmholtz-type boundary value problem. If the principal matrix is singular, nontrivial eigenfunctions may vanish on $\Omega$, and uniqueness can fail. The exact continuation statement and the asymptotic synthesis theorem address different regimes, but both express the principle that spectral restrictions limit the freedom to modify a signal on a missing set.

\section{Open problems}\label{sec:questions}

The results above identify a structural invariant and its consequences once a useful labeling is known. They also separate several questions that were conflated in the original formulation. The optimization over labelings, stable approximation of a displayed kernel, exact recovery of a graph, and recovery of spectral projectors from adjacency data are different problems.

\begin{question}[Finding a good labeling]
What is the computational complexity of evaluating or approximating $\operatorname{FR}_{\min}(G)$? Can one construct a labeling whose Fourier ratio is provably within a controlled factor of the minimum for natural graph classes?
\end{question}

This is a Fourier-analytic graph layout problem. Classical layout objectives favor local geometric organization, while the Fourier ratio favors global additive organization. Spectral ordering, seriation, and multiscale partitioning are natural heuristics, but there is presently no general approximation theorem for the Fourier-ratio objective.

\begin{question}[Graph-specific inverse structure]
Corollary \ref{cor:coset-structure} gives an exact signed coset decomposition for every Boolean kernel of bounded Fourier ratio. What additional conclusions follow when the kernel is symmetric, has zero diagonal, and is the adjacency kernel of a simple graph? Can the cancellation in the signed decomposition be controlled, and can the cosets be chosen in symmetric pairs or as graphs of affine and congruence relations? Can the general bound $\Phi(M)=\exp(M^{4+o(1)})$ be improved for adjacency kernels, regular graphs, or sparse graphs?
\end{question}

The general Fourier algebra theorem settles the existence of a dimension-free additive decomposition. The remaining problem is to use graph constraints to make that decomposition more rigid, more economical, and more directly interpretable as an edge relation.

\begin{question}[Random labelings and random graphs]
What is the typical Fourier ratio of a fixed graph under a uniformly random relabeling? What is the typical order of $\operatorname{FR}_{\min}(G)$ for an Erd\H{o}s--R\'enyi graph or a random regular graph \cite{Bollobas}?
\end{question}

The random-phase heuristic predicts a value of order $N$ for a generic labeling. The energy lower bound for many dense random graphs is only of order $\sqrt N$. Determining how much the minimization over $N!$ labelings can reduce the generic value appears to be a difficult question.

\begin{question}[Exact and constrained graph recovery]
Can one prove recovery theorems that incorporate symmetry, the zero diagonal condition, and the binary constraints on adjacency entries, and that yield Hamming-error or exact edge-recovery guarantees under a Fourier-ratio hypothesis?
\end{question}

The thresholding estimate in Remark \ref{rem:recovery-scope} is only a consequence of Frobenius approximation. A theorem designed for the discrete graph class may improve both the formulation and the required accuracy.

\begin{question}[From adjacency samples to spectral projectors]
Under what spectral-gap, perturbation, or structural hypotheses can incomplete adjacency observations be converted into accurate estimates of low-frequency projector kernels? Can such a result be combined with the Fourier-ratio bounds of Section \ref{sec:projectors} without assuming direct access to projector entries?
\end{question}

This problem is necessary if projector recovery is to be derived from ordinary graph observations. It would require control of the reconstructed adjacency operator, its spectrum, and the stability of its spectral subspaces.

\begin{question}[Common harmonic coordinates]
For which graph families is there one labeling that simultaneously minimizes, or approximately minimizes, the Fourier ratios of the adjacency matrix and a prescribed collection of Laplacian projectors?
\end{question}

Circulant graphs have exact simultaneous minimization. The Petersen graph shows that adjacency minimizers and projector minimizers can be disjoint. It would be useful to identify structural conditions lying between these two cases.

\begin{question}[Relations with graph layout and graph width]
Can $\operatorname{FR}_{\min}(G)$ be bounded in terms of bandwidth, minimum linear arrangement, treewidth, expansion, or another classical graph parameter? Are there graph families for which these quantities and the Fourier-ratio layout objective provably disagree?
\end{question}

The interval and parity labelings of the balanced complete bipartite graph already show that additive organization is not the same as interval locality. More systematic separations would clarify the geometry detected by the Fourier ratio.

\begin{question}[Perturbation and robust coordinates]
Suppose two graphs differ by a small number of edge additions or deletions. Under what conditions do they admit one labeling in which both Fourier ratios are small? Can Proposition \ref{prop:fr-perturbation} be improved by using the structure of graph edits rather than the general bound $\|\widehat H\|_1\leq N\|H\|_2$?
\end{question}

\begin{question}[Other finite abelian groups and infinite limits]
How should the invariant be formulated when the vertex set carries a noncyclic finite abelian group structure? For Cayley graphs of $\mathbb Z^d$, can one define a meaningful finite-volume Fourier ratio on growing boxes and relate its limiting behavior to the convolution symbol, growth, or isoperimetry?
\end{question}

A direct $\ell^1$ and $\ell^2$ ratio for an infinite translation-invariant adjacency kernel is usually infinite. A finite-volume or symbol-based formulation is therefore required.

\begin{question}[Sharp spectral synthesis]
Are the exponent range and spectral-block hypotheses in Theorem \ref{thm:graph-spectral-synthesis} sharp? What graph geometry forces nontrivial decay of $\operatorname{SFR}_N(u_N)$, and when can normalized $\ell^1$ uniqueness be strengthened to exact uniqueness or a quantitative stability estimate?
\end{question}

These questions suggest that Fourier ratio methods may connect graph layout, compressed sensing, spectral geometry, and inverse problems. The main results of this paper give exact lower bounds and exact model families. The larger structural and algorithmic theory remains open.

\appendix

\section{Reproducibility of the Petersen computation}\label{app:petersen-code}

The following program reproduces Proposition \ref{prop:petersen-computation}. It constructs the Petersen graph in its Kneser representation, constructs its automorphism group from the adjacent transpositions of five points, and evaluates one representative from each of the $30240$ left cosets in $S_{10}$. The program was tested with Python 3.11, NumPy 2.3.5, and SymPy 1.14.0. No external graph data are used.

\begin{verbatim}
import itertools
import numpy as np
from sympy.combinatorics import Permutation, PermutationGroup
from sympy.combinatorics.named_groups import SymmetricGroup

pairs = list(itertools.combinations(range(5), 2))
index = {p: i for i, p in enumerate(pairs)}

def induced_perm(images):
    out = []
    for a, b in pairs:
        q = tuple(sorted((images[a], images[b])))
        out.append(index[q])
    return Permutation(out)

gens = []
for i in range(4):
    images = list(range(5))
    images[i], images[i + 1] = images[i + 1], images[i]
    gens.append(induced_perm(images))

aut = PermutationGroup(gens)
sym = SymmetricGroup(10)
# SymPy returns right-coset representatives. Inversion gives
# left-coset representatives for the action on vertex values.
reps = [p ** -1 for p in sym.coset_transversal(aut)]

A = np.zeros((10, 10), dtype=np.longdouble)
for i, p in enumerate(pairs):
    for j, q in enumerate(pairs):
        if set(p).isdisjoint(q):
            A[i, j] = 1

I = np.eye(10, dtype=np.longdouble)
J = np.ones((10, 10), dtype=np.longdouble)
P2 = (A + 2 * I - J / 2) / 3
P5 = (-A + I + J / 5) / 3

N = 10
U = np.empty((N, N), dtype=np.clongdouble)
for m in range(N):
    for x in range(N):
        z = np.clongdouble(-2j * np.pi * m * x / N)
        U[m, x] = np.exp(z) / np.sqrt(np.longdouble(N))

def fourier_ratio(M, permutation):
    order = np.array([permutation(i) for i in range(N)])
    Ms = M[np.ix_(order, order)].astype(np.clongdouble)
    transformed = U @ Ms @ U
    numerator = np.sum(
        np.abs(transformed), dtype=np.longdouble
    )
    denominator = np.sqrt(
        np.sum(np.abs(Ms) ** 2, dtype=np.longdouble)
    )
    return float(numerator / denominator)

matrices = [A, P2, P5]
values = [[] for M in matrices]
mins = [float("inf") for M in matrices]
minimizers = [[] for M in matrices]
tolerance = 1e-12

for permutation in reps:
    for j, M in enumerate(matrices):
        value = fourier_ratio(M, permutation)
        values[j].append(value)
        if value < mins[j] - tolerance:
            mins[j] = value
            minimizers[j] = [permutation]
        elif abs(value - mins[j]) <= tolerance:
            minimizers[j].append(permutation)

sets = []
for group in minimizers:
    sets.append({
        tuple(p(i) for i in range(N)) for p in group
    })

print("automorphism order", aut.order())
print("orbit representatives", len(reps))
print("minima", mins)
print("representative counts", [len(x) for x in minimizers])
print("labeling counts", [120 * len(x) for x in minimizers])
print("intersections", len(sets[0] & sets[1]),
      len(sets[0] & sets[2]), len(sets[1] & sets[2]))

for j, row in enumerate(values):
    distinct = []
    for value in sorted(row):
        if not distinct or value - distinct[-1] > 1e-10:
            distinct.append(value)
    print("next gap", j, distinct[1] - distinct[0])
\end{verbatim}

The output gives the minima stated in Proposition \ref{prop:petersen-computation}, representative counts $8,2,2$, labeling counts $960,240,240$, intersection counts $0,0,2$, and next-value gaps exceeding $0.143$, $0.295$, and $0.329$.

\section*{Funding}

The work of Alex Iosevich was supported in part by the National Science Foundation under grant DMS-2506858. The funder had no role in the design of the study, the mathematical analysis, the preparation of the manuscript, or the decision to submit the article for publication.

\section*{Data availability}

All data needed to verify the theoretical results are contained in the manuscript. The Petersen computation uses no external dataset, and complete source code is included in Appendix \ref{app:petersen-code}.

\section*{Declaration of competing interest}

The authors declare that they have no known competing financial interests or personal relationships that could have appeared to influence the work reported in this article.

\section*{Declaration of generative AI and AI-assisted technologies in the manuscript preparation process}

During the preparation of this work, the authors used OpenAI ChatGPT to assist with literature discovery, structural revision, mathematical checking, and LaTeX preparation. After using this tool, the authors reviewed and edited the content as needed and take full responsibility for the content of the article.

\end{document}